\newcommand{\1}{\mathbf{1}}
\newcommand{\A}{\mathbf{A}}
\newcommand{\B}{\mathbf{B}}
\newcommand{\boldalpha}{\boldsymbol{\alpha}}
\newcommand{\boldlambda}{\boldsymbol{\lambda}}
\newcommand{\boldsigma}{\boldsymbol{\sigma}}
\newcommand{\C}{\mathbb{C}}
\DeclareMathOperator{\diag}{diag}
\newcommand{\G}{\mathcal{G}}
\newcommand{\M}{\mathbf{M}}
\newcommand{\PP}{\mathbf{P}}
\newcommand{\I}{\mathbf{I}}
\newcommand{\U}{\mathbf{U}}
\newcommand{\V}{\mathbf{V}}
\newcommand{\f}{\mathbf{f}}
\newcommand{\g}{\mathbf{g}}
\newcommand{\FF}{\mathbf{F}}
\newcommand{\GG}{\mathbf{G}}
\DeclareMathOperator{\sgn}{sgn}
\newcommand{\sigmamin}{\sigma_{\text{min}}}
\newtheorem{thm}{Theorem}[section]
\newtheorem{cor}[thm]{Corollary}\newtheorem{lem}[thm]{Lemma}
\theoremstyle{definition}
\newtheorem{defn}[thm]{Definition}
\theoremstyle{remark}
\begin{document}

\title{Spectral Analysis of Non-Hermitian Matrices and Directed Graphs}

\author{Edinah K. Gnang \thanks{Department of Applied Mathematics and Statistics, Johns Hopkins University,
email: egnang1@jhu.edu}, James M. Murphy \thanks{Department of Mathematics, Tufts University, email: jm.murphy@tufts.edu}}
\maketitle
\begin{abstract}
We generalize classical results in spectral graph theory and linear
algebra more broadly, from the case where the underlying matrix is
Hermitian to the case where it is non-Hermitian. New admissibility
conditions are introduced to replace the Hermiticity condition. We
prove new variational estimates of the Rayleigh quotient for non-Hermitian
matrices. As an application, a new Delsarte-Hoffman-type bound on
the size of the largest independent set in a directed graph is developed.
Our techniques consist in quantifying the impact of breaking the Hermitian
symmetry of a matrix and are broadly applicable. 
\end{abstract}

\section{Introduction}

The eigendecomposition is among the most powerful tools for analyzing
Hermitian matrices $\B\in\mathbb{C}^{n\times n}$, i.e. matrices satisfying
$\B^{*}=\B$, where $\B^{*}\left[j,\ell\right]=\overline{\B\left[\ell,j\right]}$.
Several classical results in linear algebra can be derived from decomposing
a Hermitian matrix $\B\in\mathbb{C}^{n\times n}$ as $\B=\U\diag(\boldlambda(\B))\U^{*}$,
where $\U$ is unitary and $\boldlambda(\B)=\left(\lambda_{\ell}(\B)\right)_{0\le\ell<n}\subset\mathbb{R}$.
In particular, under the assumption that $\B$ is Hermitian, variational
estimates on the \emph{Rayleigh quotient} \cite{Horn1990_Matrix}
can be stated in terms of $\boldlambda(\B)$: 
\begin{align}
\forall\f\in\mathbb{C}^{n\times1},\f\neq0,\ \min\left\{ \boldlambda(\B)\right\} \le\frac{\f^{*}\B\f}{\|\f\|_{2}^{2}}\le\max\left\{ \boldlambda(\B)\right\} .\label{eqn:ClassicalRayleigh}
\end{align}

In case that $\B$ is non-Hermitian, the eigenvalues of $\B$ may
be complex, or even worse the eigendecomposition may not exist at
all.

When $\B$ is non-Hermitian but \emph{diagonalizable}, its eigendecomposition
is of the form $\B=\PP\diag(\boldlambda(\B))\PP^{-1}$ for some invertible
matrix $\PP$ and scalars $\boldlambda(\B)$. Compared to the Hermitian
case, the columns of $\PP$ need not form an orthonormal basis. A
different decomposition that is available to all matrices is the \emph{singular
value decomposition (SVD)}: $\B=\U\diag(\boldsigma(\B))\V^{*}$, where
the singular values $\boldsigma(\B)=\left(\sigma_{\ell}(\B)\right)_{0\le\ell<n}$
are non-negative and $\U,\V$ are unitary matrices. However, it need
not be the case that $\U\V^{*}=\U^{*}\V=\I$, which is the primary
contrast with the eigendecomposition.

Powerful tools of linear algebra can be applied to the study of graphs
via spectral graph theory \cite{Chung1997_Spectral}. Indeed, let
$\G=(V,\B)$ be a graph, where $V$ is the set of vertices and $\B\in\{0,1\}^{|V|\times|V|}$
an adjacency matrix such that $\B\left[j,\ell\right]=1$ if there
is an edge between the $j^{th}$ and $\ell^{th}$ nodes. By analyzing
the spectral properties of $\B$, a variety of mathematical ideas
may be adapted to $\G$, including notions of geometry \cite{Mohar1989_Isoperimetric,Mohar1991_Laplacian},
Fourier and wavelet analysis \cite{Coifman2006_DiffusionWavelets,Hammond2011_Wavelets,Shuman2016_Vertex},
random diffusion processes \cite{Coifman2005_Geometric,Coifman2006_DiffusionMaps},
and clusters \cite{Shi2000_Normalized,Ng_2002Spectral}. While these
tools have contributed to a renaissance in the analysis of data, spectral
graph methods almost uniformly require the underlying graph $\G$
to be \emph{undirected}, or in linear algebra terms, $\B$ must be
Hermitian. This is a severe limitation in practice, as a variety of
real data does not lend itself to representation as an undirected
graph, for example social networks \cite{Kwak2010_Twitter}, models
for the spread of contagious disease in a heterogenous population
\cite{Kephart1992_Directed}, and predator-prey relationships \cite{Yorke1973_Predator}.


\subsection{Summary of Contributions}

This article develops new approaches for the analysis of non-Hermitian
matrices. The primary contributions are twofold. First, we prove \emph{a
generalized version of the classical variational estimates on the
Rayleigh quotient.} New \emph{admissibility conditions} are introduced
to replace the Hermiticity condition. Second, the \emph{Delsarte-Hoffman
bound on the size of independent sets in undirected graphs is generalized
to the directed setting}. Our major tool consists in quantifying the
discrepancy between Hermitian and non-Hermitian matrices, and proposing
additional hypotheses in the theorems to address this discrepancy.
The gap between Hermiticity and non-Hermiticity is made precise, and
moreover in the case of the Delsarte-Hoffman bound, the gaps between
$\B$ being Hermitian, diagonalizable (with potentially complex eigenvalues)
and $\B$ arbitrary is considered by analyzing the SVD. Our proof
methods are flexible, and may be applicable to settings not considered
in the present article.

\subsection{Related Work}

Spectral graph theory has been attempted for operators defined on
directed graphs in a variety of contexts, including for the graph
Laplacian \cite{Agaev2005_Spectra,Chung2005_Laplacians,Butler2007_Interlacing,Bauer2012_Normalized,Butler2013_Sharp,Zheng2016_Spectral,Fanuel2018_Magnetic,Chui2018_Representation}
and for nonreversible Markov chains \cite{Fill1991_Eigenvalue}. Combinatorial
results for directed graphs have also been studied \cite{Brualdi2010_Spectra,Kharaghani2015_Hoffman}.
These results do not, however, develop precise characterizations of
the ways in which classical results can be modified in the non-Hermitian
setting. In particular, the admissibility conditions proposed in this
article explicitly illustrate what is lost when a matrix is perturbed
to deviate from Hermiticity, and suggest how to compensate for the
loss of Hermiticity. Moreover, the proposed generalized Delsarte-Hoffman
bound makes no assumptions of normality of the adjacency matrix of
the underlying graph.

\subsection{Notation}

Throughout, bold typography is used to denote matrices and vectors.
Let $\I$ denote the identity matrix with size clear from context.
For a collection of points $\{\alpha_{\ell}\}_{0\le\ell<n}\subset\mathbb{C}$,
let $\diag\left\{ \left(\alpha_{\ell}\right)_{0\le\ell<n}\right\} $
denote the $n\times n$ diagonal matrix with $\ell^{th}$ diagonal
entry $\alpha_{\ell}$. Let $\1_{m\times n}$ and $\mathbf{0}_{m\times n}$
respectively denote the $m\times n$ matrix of all 1s and all 0s.
Let $\B[j,:]$ and $\B[:,\ell]$ denote the $j^{th}$ row and $\ell^{th}$
column of the matrix $\B$, respectively. For matrices $\A,\B$, we
denote the by $\A\circ\B$ the Hadamard product $\left(\A\circ\B\right)\left[j,\ell\right]=\A\left[j,\ell\right]\B\left[j,\ell\right]$.

\section{Generalized Rayleigh Quotient Estimation}

\subsection{Rayleigh Quotient for Complex Diagonalizable Matrices}

Let $\B\in\mathbb{C}^{n\times n}$. There corresponds to $\B$ a (in
general non-symmetric) bilinear form $\left\langle \f,\g\right\rangle _{\B}\mapsto\f^{*}\B\g,$
defined for $\f,\g\in\mathbb{C}^{n\times1}$. The behavior of this
bilinear form can be analyzed in a scale-invariant manner through
the Rayleigh quotient. When $\B$ is Hermitian, (\ref{eqn:ClassicalRayleigh})
states that the Rayleigh quotient $\langle\f,\f\rangle_{\B}/\|\f\|_{2}^{2}$
is controlled for $\f\neq\mathbf{0}_{n\times1}$ by the largest and
smallest eigenvalues of $\B$. We extend this result to the case when
$\B$ has complex eigenvalues.

\begin{thm}\label{thm:Complex_Rayleigh_quotient}

Let $\B\in\mathbb{C}^{n\times n}$ be decomposed as $\B=\U\diag\left\{ \boldlambda\left(\B\right)\right\} \V^{*}$
where $\U\V^{*}=\I$ and $\boldlambda\left(\B\right)\subset\mathbb{C}$.
Write $\lambda_{\ell}(\B)=\left|\lambda_{\ell}(\B)\right|e^{i\theta_{\ell}}$
for $\theta_{\ell}\in[0,2\pi)$. Let $\f,\g\in\mathbb{C}^{n\times1}$
be such that there exist $\FF,\GG\in\mathbb{C}^{n\times1}$ satisfying
$\FF\circ\GG\ne\mathbf{0}_{n\times1}$ and 
\begin{align}
\f=\V\overline{\FF},\ \g=\U\GG\mbox{ s.t. }\forall0\le\ell<n,\;\left(\FF\left[\ell\right]\GG\left[\ell\right]e^{i\theta_{\ell}}\right)\in\mathbb{R}.\label{eqn:AdmissibilityComplexRayleigh}
\end{align}
Then 
\[
\min_{0\le\ell<n}\left\{ \left|\lambda_{\ell}(\B)\right|\sgn(\FF\left[\ell\right]\GG\left[\ell\right]e^{i\theta_{\ell}})\right\} \le\frac{\f^{*}\B\g}{\underset{0\le\ell<n}{\sum}\left|\FF\left[\ell\right]\GG\left[\ell\right]\right|}\le\max_{0\le\ell<n}\left\{ \left|\lambda_{\ell}(\B)\right|\sgn(\FF\left[\ell\right]\GG\left[\ell\right]e^{i\theta_{\ell}})\right\} .
\]

\end{thm}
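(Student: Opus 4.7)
The plan is to carry out a direct computation of $\f^{*}\B\g$ under the stated parametrization, reduce the resulting expression to a weighted average of real numbers, and then invoke the trivial bound of a convex combination by its minimum and maximum.

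First I would observe that since $\U,\V$ are square and $\U\V^{*}=\I$, we also have $\V^{*}\U=\I$ (a square right inverse is a two-sided inverse). Substituting $\f=\V\overline{\FF}$ and $\g=\U\GG$ into the quadratic form and using $(\V\overline{\FF})^{*}=\FF^{T}\V^{*}$, I would compute
\[
\f^{*}\B\g \;=\; \FF^{T}\V^{*}\,\U\diag\{\boldlambda(\B)\}\V^{*}\,\U\GG \;=\; \FF^{T}\diag\{\boldlambda(\B)\}\GG \;=\; \sum_{0\le\ell<n}\FF[\ell]\,\GG[\ell]\,\lambda_{\ell}(\B).
\]
This collapses the problem to a scalar sum and is the only place where the assumption $\U\V^{*}=\I$ is used; it plays the role that unitarity plays in the Hermitian Rayleigh argument.

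Next I would use the polar form $\lambda_{\ell}(\B)=|\lambda_{\ell}(\B)|e^{i\theta_{\ell}}$ together with the admissibility condition (\ref{eqn:AdmissibilityComplexRayleigh}), which guarantees $\FF[\ell]\GG[\ell]e^{i\theta_{\ell}}\in\mathbb{R}$ for each $\ell$. This allows the factorization
\[
\FF[\ell]\GG[\ell]\,\lambda_{\ell}(\B) \;=\; \bigl(\FF[\ell]\GG[\ell]e^{i\theta_{\ell}}\bigr)\,|\lambda_{\ell}(\B)| \;=\; |\FF[\ell]\GG[\ell]|\cdot\sgn\bigl(\FF[\ell]\GG[\ell]e^{i\theta_{\ell}}\bigr)\cdot|\lambda_{\ell}(\B)|,
\]
where I have used $|e^{i\theta_{\ell}}|=1$. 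Consequently $\f^{*}\B\g$ is a nonnegatively weighted sum of the real scalars $b_{\ell}:=|\lambda_{\ell}(\B)|\sgn(\FF[\ell]\GG[\ell]e^{i\theta_{\ell}})$ with weights $a_{\ell}:=|\FF[\ell]\GG[\ell]|\ge 0$.

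Finally, since $\FF\circ\GG\neq\mathbf{0}_{n\times 1}$ the total weight $\sum_{\ell}a_{\ell}$ is strictly positive, so dividing gives an honest convex combination of the $b_{\ell}$, which is pinched between $\min_{\ell}b_{\ell}$ and $\max_{\ell}b_{\ell}$. That yields exactly the stated two-sided bound. No step presents a serious obstacle; the only subtlety worth flagging is the careful handling of conjugation in $(\V\overline{\FF})^{*}=\FF^{T}\V^{*}$, which is precisely why the hypothesis is phrased as $\f=\V\overline{\FF}$ rather than $\f=\V\FF$—it is engineered so that the inverse factor $\V^{*}$ cancels on the left, mirroring the role of $\U$ on the right.
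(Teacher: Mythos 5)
Your proposal is correct and follows essentially the same route as the paper: expand $\f^{*}\B\g$ using $\V^{*}\U=\I$ to reduce to the scalar sum $\sum_{\ell}\FF[\ell]\lambda_{\ell}(\B)\GG[\ell]$, then use the polar form of the eigenvalues together with the admissibility condition to rewrite the sum as a nonnegatively weighted combination of the real numbers $|\lambda_{\ell}(\B)|\sgn(\FF[\ell]\GG[\ell]e^{i\theta_{\ell}})$, and bound it by the min and max. Your explicit identification of the expression as a convex combination and your note about why $\sum_{\ell}|\FF[\ell]\GG[\ell]|>0$ merely spell out the ``algebraic manipulation'' the paper leaves implicit.
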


\begin{proof}Using the eigendecomposition of $\B$ we have 
\begin{align*}
\f^{*}\B\g= & \f^{*}\U\diag\left\{ \boldlambda\left(\B\right)\right\} \V^{*}\g\\
= & \left(\V\bar{\FF}\right)^{*}\U\diag\left\{ \boldlambda\left(\B\right)\right\} \V^{*}\left(\U\GG\right)\\
= & \FF^{\top}\diag\left\{ \boldlambda\left(\B\right)\right\} \GG\\
= & \sum_{0\le\ell<n}\FF\left[\ell\right]\lambda_{\ell}\left(\B\right)\GG\left[\ell\right].
\end{align*}
Writing each eigenvalue in polar form as $\lambda_{\ell}(\B)=\left|\lambda_{\ell}(\B)\right|e^{i\theta_{\ell}}$
and applying the admissibility condition (\ref{eqn:AdmissibilityComplexRayleigh})
yields 
\begin{align*}
 & \min_{0\le\ell<n}\left\{ \left|\lambda_{\ell}(\B)\right|\sgn(\FF\left[\ell\right]\GG\left[\ell\right]e^{i\theta_{\ell}})\right\} \sum_{0\le\ell<n}\left|\FF\left[\ell\right]\GG\left[\ell\right]\right|\\
\le & \sum_{0\le\ell<n}\FF\left[\ell\right]\lambda_{\ell}\left(\B\right)\GG\left[\ell\right]\\
\le & \max_{0\le\ell<n}\left\{ \left|\lambda_{\ell}(\B)\right|\sgn(\FF\left[\ell\right]\GG\left[\ell\right]e^{i\theta_{\ell}})\right\} \sum_{0\le\ell<n}\left|\FF\left[\ell\right]\GG\left[\ell\right]\right|.
\end{align*}

The result follows by algebraic manipulation. \end{proof}

The condition (\ref{eqn:AdmissibilityComplexRayleigh}) is an \emph{admissibility
condition} on the vectors $\f,\g$. In the case where $\B$ is Hermitian,
the eigenvalues of $\B$ are real and $\V=\U$ is unitary. If $\FF\left[\ell\right]\GG\left[\ell\right]\ge0$
for all $\ell$, then 
\[
\sum\limits _{0\le\ell<n}\left|\FF\left[\ell\right]\GG\left[\ell\right]\right|=\sum\limits _{0\le\ell<n}\FF\left[\ell\right]\GG\left[\ell\right]=\langle\f,\g\rangle
\]
since $\U=\V$ is unitary. If $\B$ is Hermitian and moreover $\f=\g$,
which implies that $\overline{\FF}=\GG$, we have

\begin{align*}
 & \sum\limits _{0\le\ell<n}\left|\FF\left[\ell\right]\FF\left[\ell\right]\right|=\|\f\|_{2}^{2},\\
 & \min_{0\le\ell<n}\left|\lambda_{\ell}(\B)\right|\sgn(\FF\left[\ell\right]\FF\left[\ell\right]e^{i\theta_{\ell}})=\min\{\boldlambda(\B)\},\\
 & \max_{0\le\ell<n}\left|\lambda_{\ell}(\B)\right|\sgn(\FF\left[\ell\right]\FF\left[\ell\right]e^{i\theta_{\ell}})=\max\{\boldlambda(\B)\},
\end{align*}

which recovers $(\ref{eqn:ClassicalRayleigh})$. In the general case,
$\f,\g$ must interact in a particular way for Theorem \ref{thm:Complex_Rayleigh_quotient}
to hold, as quantified by the admissibility condition (\ref{eqn:AdmissibilityComplexRayleigh}).
A slightly more general result holds, using the singular value decomopsition modulo
signings of the singular values:

\begin{thm}\label{thm:Complex_Rayleigh_quotient-1}

Let $\B\in\mathbb{C}^{n\times n}$ be decomposed as $\B=\U\diag\left\{ \boldsigma\left(\B\right)\right\} \V^{*}$
where $\U\U^{*}=\I=\V\V^{*}$ and $\boldsigma\left(\B\right)\subset\mathbb{C}$.
Write $\sigma_{\ell}(\B)=\left|\sigma_{\ell}(\B)\right|e^{i\theta_{\ell}}$
for $\theta_{\ell}\in\left\{ 0,\pi\right\} $. Let $\f,\g\in\mathbb{C}^{n\times1}$
be such that there exist $\FF,\GG\in\mathbb{C}^{n\times1}$ satisfying
$\FF\circ\GG\ne\mathbf{0}_{n\times1}$ and 
\begin{align}
\f=\U\overline{\FF},\ \g=\V\GG\mbox{ s.t. }\forall0\le\ell<n,\;\left(\FF\left[\ell\right]\GG\left[\ell\right]e^{i\theta_{\ell}}\right)\in\mathbb{R}.\label{eqn:AdmissibilityComplexRayleigh-1}
\end{align}
Then 
\[
\min_{0\le\ell<n}\left\{ \left|\sigma_{\ell}(\B)\right|\sgn(\FF\left[\ell\right]\GG\left[\ell\right]e^{i\theta_{\ell}})\right\} \le\frac{\f^{*}\B\g}{\underset{0\le\ell<n}{\sum}\left|\FF\left[\ell\right]\GG\left[\ell\right]\right|}\le\max_{0\le\ell<n}\left\{ \left|\sigma_{\ell}(\B)\right|\sgn(\FF\left[\ell\right]\GG\left[\ell\right]e^{i\theta_{\ell}})\right\} .
\]

\end{thm}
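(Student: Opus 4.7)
The proof plan is to mirror the argument for Theorem \ref{thm:Complex_Rayleigh_quotient}, exploiting the fact that here both $\U$ and $\V$ are individually unitary (rather than only satisfying $\U\V^*=\I$), so the bilinear form collapses in exactly the same way but through a more direct calculation.

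First I would substitute $\f = \U\overline{\FF}$ and $\g = \V\GG$ into $\f^*\B\g$ and apply $\U^*\U = \I$ and $\V^*\V = \I$ separately:
\begin{align*}
\f^*\B\g &= (\U\overline{\FF})^*\,\U\diag\{\boldsigma(\B)\}\V^*\,(\V\GG) \\
&= \FF^\top (\U^*\U)\diag\{\boldsigma(\B)\}(\V^*\V)\GG \\
&= \sum_{0\le\ell<n}\FF[\ell]\,\sigma_\ell(\B)\,\GG[\ell],
\end{align*}
where the first equality uses $(\U\overline{\FF})^* = \FF^\top\U^*$.

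Next, I would write $\sigma_\ell(\B)=|\sigma_\ell(\B)|e^{i\theta_\ell}$ with $\theta_\ell\in\{0,\pi\}$, so each $e^{i\theta_\ell}=\pm1$. The admissibility condition (\ref{eqn:AdmissibilityComplexRayleigh-1}) guarantees $\FF[\ell]\GG[\ell]e^{i\theta_\ell}\in\mathbb{R}$, so each summand is real and factors as
\[
\FF[\ell]\,\sigma_\ell(\B)\,\GG[\ell] \;=\; |\sigma_\ell(\B)|\,\sgn\bigl(\FF[\ell]\GG[\ell]e^{i\theta_\ell}\bigr)\,|\FF[\ell]\GG[\ell]|,
\]
using $|e^{i\theta_\ell}|=1$. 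The hypothesis $\FF\circ\GG\neq\mathbf{0}_{n\times1}$ ensures $\sum_\ell|\FF[\ell]\GG[\ell]|>0$, so dividing through is legitimate. Bounding the resulting nonnegatively weighted sum by the minimum and maximum of the signed quantities $|\sigma_\ell(\B)|\sgn(\FF[\ell]\GG[\ell]e^{i\theta_\ell})$ and normalizing yields the stated inequality.

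There is no substantive obstacle: the argument is structurally identical to that of Theorem \ref{thm:Complex_Rayleigh_quotient}, with $\U$ and $\V$ trading roles in the definitions of $\f$ and $\g$. The only conceptual point worth flagging is that restricting $\theta_\ell\in\{0,\pi\}$ encodes the phrase \emph{modulo signings of the singular values}: one allows real-signed (as opposed to nonnegative) diagonal entries in the SVD, while gaining the extra freedom that $\U$ and $\V$ are decoupled into two independently unitary factors rather than being tied by the relation $\U\V^*=\I$.
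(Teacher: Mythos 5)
Your proof is correct and follows essentially the same route as the paper's: substitute $\f=\U\overline{\FF}$, $\g=\V\GG$, use $\U^{*}\U=\I=\V^{*}\V$ to collapse $\f^{*}\B\g$ to $\sum_{\ell}\FF[\ell]\sigma_{\ell}(\B)\GG[\ell]$, then apply the admissibility condition and the nonnegativity of the weights $|\FF[\ell]\GG[\ell]|$ to obtain the two-sided bound. The only difference is that you spell out the factorization $\FF[\ell]\sigma_{\ell}(\B)\GG[\ell]=|\sigma_{\ell}(\B)|\sgn(\FF[\ell]\GG[\ell]e^{i\theta_{\ell}})|\FF[\ell]\GG[\ell]|$ and note explicitly where the hypothesis $\FF\circ\GG\neq\mathbf{0}_{n\times1}$ is used, both of which the paper leaves implicit.
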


\begin{proof}Using the Singular value decomopsition modulo signings
of the singular values of $\B$ we have 
\begin{align*}
\f^{*}\B\g= & \f^{*}\U\diag\left\{ \boldsigma\left(\B\right)\right\} \V^{*}\g\\
= & \left(\U\bar{\FF}\right)^{*}\U\diag\left\{ \boldsigma\left(\B\right)\right\} \V^{*}\left(\V\GG\right)\\
= & \FF^{\top}\diag\left\{ \boldsigma\left(\B\right)\right\} \GG\\
= & \sum_{0\le\ell<n}\FF\left[\ell\right]\sigma_{\ell}\left(\B\right)\GG\left[\ell\right].
\end{align*}
Writing each singular value in polar form as $\sigma_{\ell}(\B)=\left|\sigma_{\ell}(\B)\right|e^{i\theta_{\ell}}$
and applying the admissibility condition (\ref{eqn:AdmissibilityComplexRayleigh-1})
yields 
\begin{align*}
 & \min_{0\le\ell<n}\left\{ \left|\sigma_{\ell}(\B)\right|\sgn(\FF\left[\ell\right]\GG\left[\ell\right]e^{i\theta_{\ell}})\right\} \sum_{0\le\ell<n}\left|\FF\left[\ell\right]\GG\left[\ell\right]\right|\\
\le & \sum_{0\le\ell<n}\FF\left[\ell\right]\sigma_{\ell}\left(\B\right)\GG\left[\ell\right]\\
\le & \max_{0\le\ell<n}\left\{ \left|\sigma_{\ell}(\B)\right|\sgn(\FF\left[\ell\right]\GG\left[\ell\right]e^{i\theta_{\ell}})\right\} \sum_{0\le\ell<n}\left|\FF\left[\ell\right]\GG\left[\ell\right]\right|.
\end{align*}

The result follows by algebraic manipulation. \end{proof}

\subsection{Illustration of Admissible Vectors Via Index Rotations}

Admissible vectors may be constructed as follows. Consider the matrix
transformation prescribed by performing a rotation to the entry indices
of a matrix $\mathbf{A}\in\mathbb{C}^{n\times n}$: 
\[
\left(\mathbf{A}^{\text{R}_{\theta}}\right)\left[j,\ell\right]=
\]
\[
\begin{cases}
\begin{array}{c}
\mathbf{A}\left[\left(j-\left\lfloor \frac{n}{2}\right\rfloor \right)\cos\theta+\left(\left\lfloor \frac{n}{2}\right\rfloor -\ell\right)\sin\theta+\left\lfloor \frac{n}{2}\right\rfloor ,\left(j-\left\lfloor \frac{n}{2}\right\rfloor \right)\sin\theta-\left(\left\lfloor \frac{n}{2}\right\rfloor -\ell\right)\cos\theta+\left\lfloor \frac{n}{2}\right\rfloor \right],\:n\text{ odd}\\
\\
\mathbf{A}\left[\left(j-\frac{n-1}{2}\right)\cos\theta+\left(\frac{n-1}{2}-\ell\right)\sin\theta+\frac{n-1}{2},\left(j-\frac{n-1}{2}\right)\sin\theta-\left(\frac{n-1}{2}-\ell\right)\cos\theta+\frac{n-1}{2}\right],\:n\text{ even}
\end{array}\end{cases}
\]
where rotation angles are restricted to $\theta\in\left\{ 0,\frac{\pi}{2},\pi,\frac{3\pi}{2}\right\} $.
Together with the transpose, these transformations generate the dihedral
group of order $8$. Given a matrix $\A$, we have \[
\mathbf{A}^{\text{R}_{0}}=\mathbf{A}=\left(\begin{array}{rrr}
a_{00} & a_{01} & a_{02}\\
a_{10} & a_{11} & a_{12}\\
a_{20} & a_{21} & a_{22}
\end{array}\right),\]
\[
\mathbf{A}^{\text{R}_{\frac{\pi}{2}}}=\left(\begin{array}{rrr}
a_{20} & a_{10} & a_{00}\\
a_{21} & a_{11} & a_{01}\\
a_{22} & a_{12} & a_{02}
\end{array}\right),\mathbf{A}^{\text{R}_{\pi}}=\left(\begin{array}{rrr}
a_{22} & a_{21} & a_{20}\\
a_{12} & a_{11} & a_{10}\\
a_{02} & a_{01} & a_{00}
\end{array}\right),\mathbf{A}^{\text{R}_{\frac{3\pi}{2}}}=\left(\begin{array}{rrr}
a_{02} & a_{12} & a_{22}\\
a_{01} & a_{11} & a_{21}\\
a_{00} & a_{10} & a_{20}
\end{array}\right).
\]
Furthermore, these transformations preserve unitarity:

\begin{lem}Suppose $\mathbf{A}$ is unitary. Then $\forall\:\theta\in\left\{ 0,\frac{\pi}{2},\pi,\frac{3\pi}{2}\right\} $,
$\A^{\text{R}_{\theta}}$ is also unitary. \end{lem}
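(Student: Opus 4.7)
The plan is to express each of the four index-rotation transformations as a matrix product involving $\mathbf{A}$ (or its transpose) and the exchange matrix $\mathbf{J}\in\{0,1\}^{n\times n}$ defined by $\mathbf{J}[j,\ell]=1$ iff $j+\ell=n-1$, after which unitarity will follow from the facts that permutation matrices are unitary and that products of unitary matrices remain unitary.

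First I would record two elementary ingredients. The exchange matrix $\mathbf{J}$ is real, symmetric, and involutive, so $\mathbf{J}\mathbf{J}^{*}=\mathbf{I}$. Moreover, if $\mathbf{A}$ is unitary then so is $\mathbf{A}^{\top}$, since
\[
(\mathbf{A}^{\top})(\mathbf{A}^{\top})^{*}=\mathbf{A}^{\top}\overline{\mathbf{A}}=\overline{\mathbf{A}^{*}\mathbf{A}}=\overline{\mathbf{I}}=\mathbf{I}.
\]

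Next I would unfold the piecewise definition of $\mathbf{A}^{\text{R}_{\theta}}$ at each of $\theta\in\{0,\pi/2,\pi,3\pi/2\}$, using that $\cos\theta,\sin\theta\in\{-1,0,1\}$ so that the compound indices inside $\mathbf{A}[\cdot,\cdot]$ collapse to simple expressions of the form $(j,\ell)$, $(n-1-\ell,j)$, $(n-1-j,n-1-\ell)$, or $(\ell,n-1-j)$. The resulting identifications, valid regardless of the parity of $n$, should read
\[
\mathbf{A}^{\text{R}_{0}}=\mathbf{A},\quad \mathbf{A}^{\text{R}_{\pi/2}}=\mathbf{A}^{\top}\mathbf{J},\quad \mathbf{A}^{\text{R}_{\pi}}=\mathbf{J}\mathbf{A}\mathbf{J},\quad \mathbf{A}^{\text{R}_{3\pi/2}}=\mathbf{J}\mathbf{A}^{\top},
\]
which can be cross-checked against the $3\times3$ examples displayed above. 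Combining these identifications with the two ingredients above, each $\mathbf{A}^{\text{R}_{\theta}}$ is a product of unitary matrices and hence unitary.

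The main obstacle is purely bookkeeping: carefully simplifying the rotated index expressions so that the four transformations line up with the correct product of $\mathbf{A}$, $\mathbf{A}^{\top}$, and $\mathbf{J}$. Once these four identities are in hand the conclusion is immediate, and the very same argument would upgrade to the full dihedral group of eight symmetries generated by the four index rotations together with the transpose.
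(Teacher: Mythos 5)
Your proof is correct, and it takes a genuinely different route from the paper's. The paper proves the $\theta = \pi/2$ case by arguing directly that the rows of $\A^{\text{R}_{\pi/2}}$ are permutations of the columns of $\A$ (and therefore orthonormal), with the other three angles dispatched by ``similarly.'' You instead factor each rotation as a product of $\A$, $\A^{\top}$, and the exchange (anti-diagonal permutation) matrix $\mathbf{J}$:
\[
\A^{\text{R}_{0}}=\A,\quad \A^{\text{R}_{\pi/2}}=\A^{\top}\mathbf{J},\quad \A^{\text{R}_{\pi}}=\mathbf{J}\A\mathbf{J},\quad \A^{\text{R}_{3\pi/2}}=\mathbf{J}\A^{\top}.
\]
These identities are easy to verify from the definition, e.g. for $\theta=\pi/2$ the index formula collapses (in both parity cases) to $\A^{\text{R}_{\pi/2}}[j,\ell]=\A[n-1-\ell,j]=\A^{\top}[j,n-1-\ell]=(\A^{\top}\mathbf{J})[j,\ell]$, and the displayed $3\times3$ examples confirm all four. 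Since $\mathbf{J}$ is a permutation matrix and $\A^{\top}$ is unitary whenever $\A$ is, each factor is unitary and the products are unitary. Your approach buys uniformity across all four angles, avoids the somewhat fiddly row-permutation argument (where the paper's proof in fact has an index slip, writing $\A[n-1-j,:]$ where $\A[:,j]$ reversed is meant), and makes the dihedral-group structure transparent, since $\mathbf{J}$-conjugation and transposition visibly generate the whole group. The paper's direct argument is slightly more self-contained in that it never introduces $\mathbf{J}$, but it pays for that with more bookkeeping.
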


\begin{proof}

It suffices to prove that $\left(\A^{\text{R}_{\frac{\pi}{2}}}\right)^{*}\left(\A^{\text{R}_{\frac{\pi}{2}}}\right)=\mathbf{I}$.
Recall that $\A$ unitary means that 
\[
\A^{*}\A=\I=\A\A^{*},
\]
and so $\A$ has rows and columns with $\ell^{2}$ norm equal to 1.
Since the entries of the $i^{th}$ row of $\A^{\text{R}_{\frac{\pi}{2}}}$
correspond to a permutation of the entries of the $(n-1-j)^{th}$
column of $\mathbf{A}$ it follows that $\A^{\text{R}_{\frac{\pi}{2}}}$
also has rows and columns with $\ell^{2}$ norm equal to 1. Furthermore
since every column of $\mathbf{A}$ undergoes the same permutation
when converted into a row of $\A^{\text{R}_{\frac{\pi}{2}}}$ it follows
that $\forall\,0\le j<\ell<n,$ 
\[
\quad\A^{\text{R}_{\frac{\pi}{2}}}\left[j,:\right]\left(\A^{\text{R}_{\frac{\pi}{2}}}\left[\ell,:\right]\right)^{*}=\mathbf{A}\left[n-1-j,:\right]\left(\mathbf{A}\left[n-1-\ell,:\right]\right)^{*}=\begin{cases}
\begin{array}{cc}
1, & j=\ell,\\
0, & \text{otherwise},
\end{array}\end{cases}
\]
thus completing the proof. The other angles follow similarly.

\end{proof}

Moreover, index rotations obey a convenient multiplicative identity:

\begin{lem}\label{lem:RotationMultiplication}Let $\A,\B\in\mathbb{C}^{n\times n}$.
Then $\left(\mathbf{A}\mathbf{B}\right)^{\text{R}_{\frac{\pi}{2}}}=\mathbf{B}^{\top}\left(\left(\mathbf{A}^{\top}\right)^{\text{R}_{\frac{3\pi}{2}}}\right)^{\top}$.
\end{lem}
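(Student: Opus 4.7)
The plan is to verify the identity entrywise. Both sides are built from a matrix product and index-rotation symmetries, so after unpacking the definition of $R_\theta$ each side reduces to the same summation. Before starting, I would derive streamlined coordinate formulas for the two rotations that appear. Substituting $\cos\theta,\sin\theta$ for $\theta=\pi/2$ and $\theta=3\pi/2$ into the general definition, and noting that the quantities $2\lfloor n/2\rfloor$ (odd case) and $2\cdot\frac{n-1}{2}$ (even case) both yield $n-1$ in the combinations that survive, I obtain
\[
\A^{R_{\pi/2}}[j,\ell] = \A[n-1-\ell,\,j], \qquad \A^{R_{3\pi/2}}[j,\ell] = \A[\ell,\,n-1-j].
\]
The worked $n=3$ examples displayed just before the lemma confirm both formulas.

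With these in hand I would compute the left-hand side directly:
\[
(\A\B)^{R_{\pi/2}}[j,\ell] = (\A\B)[n-1-\ell,\,j] = \sum_{k} \A[n-1-\ell,k]\,\B[k,j].
\]
For the right-hand side I would work from the inside out. Setting $\mathbf{C} := (\A^\top)^{R_{3\pi/2}}$, the formula for $R_{3\pi/2}$ gives $\mathbf{C}[j,\ell] = \A^\top[\ell,\,n-1-j] = \A[n-1-j,\,\ell]$, so $\mathbf{C}^\top[j,\ell] = \A[n-1-\ell,\,j]$. Then
\[
(\B^\top \mathbf{C}^\top)[j,\ell] = \sum_k \B^\top[j,k]\,\mathbf{C}^\top[k,\ell] = \sum_k \B[k,j]\,\A[n-1-\ell,k],
\]
which matches the left-hand side term for term.

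There is no real analytic content here; the only obstacle is bookkeeping. The ambient definition of $R_\theta$ splits into cases for $n$ odd versus $n$ even and carries half-integer offsets in the even case, which invites arithmetic slips. Distilling $R_{\pi/2}$ and $R_{3\pi/2}$ into the clean index-reversal formulas above at the outset is the step I would expect to require the most care; everything afterward is a routine two-line index chase. A minor stylistic choice is whether to write the proof in the notation $\mathbf{C}$ above or to absorb the transposes into a single chain of equalities — I would prefer the former for readability, since double-transposing $(\A^\top)^{R_{3\pi/2}}$ inline tends to obscure which index is being reflected at each stage.
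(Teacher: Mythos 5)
Your proof is correct and takes essentially the same route as the paper's: both arguments unpack the definition of the index rotation and verify the identity entrywise. The only cosmetic difference is that you first distill the general $R_\theta$ formula into the clean reversals $\A^{R_{\pi/2}}[j,\ell]=\A[n-1-\ell,j]$ and $\A^{R_{3\pi/2}}[j,\ell]=\A[\ell,n-1-j]$ (observing that the odd/even cases collapse because $2\lfloor n/2\rfloor$ and $2\cdot\tfrac{n-1}{2}$ both equal $n-1$ in the surviving combinations), whereas the paper substitutes $\theta=\pi/2$ and $\theta=3\pi/2$ directly into the cosine/sine expression and treats the odd case explicitly, noting the even case is analogous; your normalization handles both parities at once and is slightly tidier, but the mathematical content is the same.
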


\begin{proof}We prove the case when $n$ is odd; the argument is
analogous for $n$ even. Recall that 
\[
\left(\A\B\right)\left[j,\ell\right]=\sum_{0\le t<n}\A\left[j,t\right]\B\left[t,\ell\right].
\]
Applying the rotation operator,

\begin{align*}
 & \left(\A\B\right)^{\text{R}_{\theta}}\left[j,\ell\right]=\\
 & \sum_{0\le t<n}\A\left[\left(j-\left\lfloor \frac{n}{2}\right\rfloor \right)\cos\theta+\left(\left\lfloor \frac{n}{2}\right\rfloor -\ell\right)\sin\theta+\left\lfloor \frac{n}{2}\right\rfloor ,t\right]\B\left[t,\left(j-\left\lfloor \frac{n}{2}\right\rfloor \right)\sin\theta-\left(\left\lfloor \frac{n}{2}\right\rfloor -\ell\right)\cos\theta+\left\lfloor \frac{n}{2}\right\rfloor \right].
\end{align*}
In particular, if $\theta=\frac{\pi}{2}$ we have 
\[
\left(\A\B\right)^{\text{R}_{\frac{\pi}{2}}}\left[j,\ell\right]=\sum_{0\le t<n}\B\left[t,j\right]\A\left[\left(n-1\right)-\ell,t\right].
\]
Similarly, 
\begin{align*}
 & \left(\mathbf{A}^{\top}\right)^{\text{R}_{\theta}}\left[j,\ell\right]=\\
 & \mathbf{A}\left[\left(\ell-\left\lfloor \frac{n}{2}\right\rfloor \right)\cos\left(-\theta\right)+\left(\left\lfloor \frac{n}{2}\right\rfloor -j\right)\sin\left(-\theta\right)+\left\lfloor \frac{n}{2}\right\rfloor ,\left(\ell-\left\lfloor \frac{n}{2}\right\rfloor \right)\sin\left(-\theta\right)-\left(\left\lfloor \frac{n}{2}\right\rfloor -j\right)\cos\left(-\theta\right)+\left\lfloor \frac{n}{2}\right\rfloor \right]
\end{align*}
In particular, for $\theta=\frac{3\pi}{2}$ we have 
\[
\left(\mathbf{A}^{\top}\right)^{\text{R}_{\frac{3\pi}{2}}}\left[j,\ell\right]=\mathbf{A}\left[\left(n-1\right)-j,\ell\right],
\]
from which the desired claim follows. \end{proof}

We now establish the existence of a class of admissible vectors.

\begin{thm}\label{cor:tumble-2} Let $\M\in\mathbb{R}^{n\times n}$
be Hermitian such that $\M^{\text{R}_{\frac{\pi}{2}}}$ is diagonalizable
with 
\[
\M^{\text{R}_{\frac{\pi}{2}}}=\U\diag\left\{ \boldlambda\left(\M^{\text{R}_{\frac{\pi}{2}}}\right)\right\} \V^{*},\ \U\V^{*}=\I.
\]
Write the eigenvalues of $\M^{\text{R}_{\frac{\pi}{2}}}$ in the polar
form $\lambda_{\ell}\left(\M^{\text{R}_{\frac{\pi}{2}}}\right)=\left|\lambda_{\ell}\left(\M^{\text{R}_{\frac{\pi}{2}}}\right)\right|e^{i\theta_{\ell}},$
$\ell=0,\dots,n-1$. Then for every $0\le j<n$ there are admissible
vector pairs $\f,\g\in\mathbb{C}^{n\times1}$ for which 
\[
\f=\V\bar{\FF},\:\g=\U\GG,\:\FF\circ\GG\ne\mathbf{0}_{n\times1}\mbox{ s.t. }\forall\:0\le\ell<n,\;\left(\FF\left[\ell\right]\GG\left[\ell\right]e^{i\theta_{\ell}}\right)\in\mathbb{R}
\]
and moreover, 
\[
\left(\min_{0\le\ell<n}\lambda_{\ell}\left(\M\right)\right)\|\f\|_{2}^{2}\le\lambda_{j}\left(\M^{\text{R}_{\frac{\pi}{2}}}\right)\FF\left[j\right]\GG\left[j\right]\le\left(\max_{0\le\ell<n}\lambda_{\ell}\left(\M\right)\right)\|\f\|_{2}^{2}.
\]
\end{thm}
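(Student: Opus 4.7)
The plan is to exhibit admissible $\f, \g$ explicitly by supporting both $\FF$ and $\GG$ only at the single index $j$. Set $\FF = \alpha\,\mathbf{e}_j$ and $\GG = \beta\,\mathbf{e}_j$ for scalars $\alpha,\beta \in \mathbb{C}$ to be chosen, where $\mathbf{e}_j$ denotes the $j$th standard basis vector. This gives $\f = \bar{\alpha}\,\V[:,j]$ and $\g = \beta\,\U[:,j]$, and the admissibility condition (\ref{eqn:AdmissibilityComplexRayleigh}) is automatic at every $\ell \ne j$ (since the product $\FF[\ell]\GG[\ell]$ vanishes there), reducing to the single scalar constraint $\alpha\beta\,e^{i\theta_j} \in \mathbb{R}$ at $\ell = j$.

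I would parametrize this remaining constraint by fixing $\alpha = 1$ and writing $\beta = c\,e^{-i\theta_j}$ for a real parameter $c \ne 0$ (the latter enforcing $\FF \circ \GG \ne \mathbf{0}_{n\times 1}$). With this choice the middle quantity simplifies directly to $\lambda_j(\M^{\text{R}_{\frac{\pi}{2}}})\,\FF[j]\,\GG[j] = |\lambda_j(\M^{\text{R}_{\frac{\pi}{2}}})|\,c$, which is manifestly real. Meanwhile $\|\f\|_2^2 = \|\V[:,j]\|_2^2$ is a fixed positive constant, positivity following from the biorthogonality $\V[:,j]^*\U[:,j] = 1$ implicit in $\U\V^* = \I$, which prevents $\V[:,j]$ from vanishing. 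The two-sided bound thus reduces to a purely scalar existence question: find a nonzero real $c$ for which $|\lambda_j(\M^{\text{R}_{\frac{\pi}{2}}})|\,c$ lies in the closed Rayleigh interval $[\min_\ell \lambda_\ell(\M)\,\|\V[:,j]\|_2^2,\,\max_\ell \lambda_\ell(\M)\,\|\V[:,j]\|_2^2]$. Whenever $|\lambda_j(\M^{\text{R}_{\frac{\pi}{2}}})|>0$, varying $c$ over $\mathbb{R}$ sweeps all real values, and any target in the interval is reachable, so existence of a nonzero valid $c$ follows provided the interval contains a nonzero point.

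The main obstacle is ruling out two degenerate configurations in which this conclusion could fail: (a) the Rayleigh interval collapses to $\{0\}$ while $|\lambda_j(\M^{\text{R}_{\frac{\pi}{2}}})|>0$, and (b) $|\lambda_j(\M^{\text{R}_{\frac{\pi}{2}}})|=0$ while $0$ lies outside the Rayleigh interval. I would dispatch both using the structural identity $\M^{\text{R}_{\frac{\pi}{2}}} = \M\,\mathbf{J}$, where $\mathbf{J}$ denotes the antidiagonal index-reversal permutation matrix; this identity follows by direct inspection of the definition of the $\text{R}_{\frac{\pi}{2}}$ index rotation combined with the symmetry of $\M$, and $\mathbf{J}$ is an orthogonal involution and hence invertible. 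In case (a), collapse to $\{0\}$ forces $\min \lambda(\M) = \max \lambda(\M) = 0$, i.e.\ $\M = 0$, whence $\M^{\text{R}_{\frac{\pi}{2}}} = 0$ and $|\lambda_j(\M^{\text{R}_{\frac{\pi}{2}}})| = 0$, contradicting the case hypothesis. In case (b), singularity of $\M\,\mathbf{J}$ combined with invertibility of $\mathbf{J}$ forces $\M$ itself to be singular, so $0$ belongs to the spectrum of $\M$ and hence to the Rayleigh interval. With both degeneracies eliminated, the chosen pair $(\alpha,\beta)$ produces admissible $\f,\g$ realizing the desired inequality.
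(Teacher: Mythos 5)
Your proof is correct, but it takes a genuinely different and more elementary route than the paper. The paper exhibits a \emph{canonical} admissible pair $\f=\left(\sum_{\ell}\I[:,\ell]\I[n-1-\ell,:]\right)\U[:,j]$, $\g=\U[:,j]$ and, using the rotation--product identity (Lemma~\ref{lem:RotationMultiplication}) to establish $\M=\M^{\text{R}_{\pi/2}}\mathbf{J}$ with $\mathbf{J}$ the antidiagonal reversal, proves the \emph{exact} identity $\lambda_j(\M^{\text{R}_{\pi/2}})\FF[j]\GG[j]=\f^*\M\f$; the bound is then automatic from the classical Rayleigh inequality for the Hermitian $\M$. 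You instead support $\FF,\GG$ on the single coordinate $j$, absorb the phase $e^{-i\theta_j}$ into a free real scale $c$, and choose $c$ so that $|\lambda_j|c$ lands in the fixed Rayleigh interval, disposing of the two degenerate configurations via the same factorization $\M^{\text{R}_{\pi/2}}=\M\mathbf{J}$. Your approach is shorter and uses the rotation machinery only peripherally, but it also exposes that the theorem, read literally as a bare existence statement, is considerably weaker than what the paper's construction actually delivers: your proof satisfies the inequality by tuning a scalar to the target interval, whereas the paper shows a specific structural pair for which the middle quantity \emph{is} a Rayleigh quotient of $\M$, with no tuning. That identity---not merely containment in the interval---is the content the paper means to highlight, so if you were presenting this result you would want to add the exact equality $\lambda_j(\M^{\text{R}_{\pi/2}})\FF[j]\GG[j]=\f^*\M\f$ for the canonical pair, which your scaling argument does not recover.
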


\begin{proof} For an arbitrary matrix $\mathbf{B}\in\mathbb{C}^{n\times n}$
the following identity follows from Lemma 2.3 
\[
\mathbf{f}^{*}\B\mathbf{g}=\mathbf{f}^{*}\left(\B^{\top}\right)^{\text{R}_{\frac{\pi}{2}}}\left(\sum_{0\le\ell<n}\I\left[:,\ell\right]\I\left[n-1-\ell,:\right]\right)\mathbf{g},
\]
since for any matrix $\mathbf{B}$ we have 
\begin{align*}
\left(\mathbf{B}^{\top}\right)^{\text{R}_{\frac{\pi}{2}}}\left(\sum_{0\le\ell<n}\mathbf{I}\left[:,\ell\right]\mathbf{I}\left[n-1-\ell,:\right]\right)= & \left(\mathbf{B}^{\top}\right)^{\text{R}_{\frac{\pi}{2}}}\left(\left(\mathbf{I}^{\top}\right)^{\text{R}_{\frac{3\pi}{2}}}\right)^{\top}\\
= & \left(\left(\left(\mathbf{B}^{\top}\right)^{\text{R}_{\frac{\pi}{2}}}\right)^{\top}\right)^{\top}\left(\left(\mathbf{I}^{\top}\right)^{\text{R}_{\frac{3\pi}{2}}}\right)^{\top}\\
= & \left(\mathbf{I}\left(\left(\mathbf{B}^{\top}\right)^{\text{R}_{\frac{\pi}{2}}}\right)^{\top}\right)^{\text{R}_{\frac{\pi}{2}}}\\
= & \left(\left(\left(\mathbf{B}^{\top}\right)^{\text{R}_{\frac{\pi}{2}}}\right)^{\top}\right)^{\text{R}_{\frac{\pi}{2}}}\\
= & \mathbf{B}.
\end{align*}
In particular, for a real Hermitian $\M$, 
\[
\mathbf{f}^{*}\M\mathbf{f}=\mathbf{f}^{*}\M^{\text{R}_{\frac{\pi}{2}}}\left(\sum_{0\le\ell<n}\I\left[:,\ell\right]\I\left[n-1-\ell,:\right]\right)\mathbf{f}.
\]
For some fixed $j$ let 
\[
\f=\left(\sum_{0\le\ell<n}\I\left[:,\ell\right]\I\left[n-1-\ell,:\right]\right)\U\left[:,j\right],\ \mathbf{g}=\U\left[:,j\right].
\]
Note that there exist a unique pair of vectors $\bar{\FF},\GG\in\mathbb{C}^{n\times1}$
such that $\f=\V\bar{\FF}\;\text{ and }\;\g=\U\GG$. In particular,
$\GG=(0,0,\dots,0,1,0,\dots,0)^{\top}$, with the 1 in the $j^{th}$
coordinate. So, to show admissibility of the vector pair $\mathbf{f},\mathbf{g}\in\mathbb{C}^{n\times1}$,
it thus suffices to show $\FF[j]\GG[j]e^{i\theta_{j}}\in\mathbb{R}$.
This follows from the fact that 
\[
\f^{*}\M^{\text{R}_{\frac{\pi}{2}}}\g=\f^{*}\M^{\text{R}_{\frac{\pi}{2}}}\left(\sum_{0\le\ell<n}\I\left[:,\ell\right]\I\left[n-1-\ell,:\right]\right)\mathbf{f}=\left(\f^{*}\M\f\right)\in\mathbb{R}
\]
and also 
\[
\f^{*}\M^{\text{R}_{\frac{\pi}{2}}}\g=\FF^{\top}\diag(\{\boldlambda(\M^{\text{R}_{\frac{\pi}{2}}})\})\GG=\FF[j]\lambda_{j}\left(\M^{\text{R}_{\frac{\pi}{2}}}\right)\GG[j].
\]
Moreover, an application of the variational Rayleigh quotient estimates
for $\M$ yields
\[
\min_{0\le\ell<n}\lambda_{\ell}\left(\M\right)\|\f\|_{2}^{2}\le\FF[j]\lambda_{j}\left(\M^{\text{R}_{\frac{\pi}{2}}}\right)\GG[j]\le\left(\max_{0\le\ell<n}\lambda_{\ell}\left(\M\right)\right)\|\f\|_{2}^{2},
\]
thus concluding the proof.

\end{proof}

We remark that the matrix $\M^{\text{R}_{\frac{\pi}{2}}}$ is a \emph{perHermitian
matrix} \cite{Golub2012_Matrix}.  Note that the rotation operator preserves the eigenvalues of a Hermitian matrix, though the eigenvectors change in general.

\begin{cor}\label{cor:tumble-1} Let $\M$ be a Hermitian matrix
with spectral decomposition $\M=\V^{*}\diag\{\boldlambda(\M)\}\V$,
$\V^{*}\V=\I$. Then $\M^{\text{R}_{\frac{\pi}{2}}}=\mathbf{V}^{\top}\diag\left\{ \boldlambda\left(\M\right)\right\} \left(\left(\mathbf{V}^{*\top}\right)^{\text{R}_{\frac{3\pi}{2}}}\right)^{\top}$.\end{cor}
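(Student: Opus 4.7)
The plan is to apply the multiplicative identity of Lemma \ref{lem:RotationMultiplication} directly to the three-factor spectral decomposition $\M = \V^* \diag\{\boldlambda(\M)\} \V$, after regrouping the factors into two blocks in the right way.

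Specifically, I would set $\A = \V^*$ and $\B = \diag\{\boldlambda(\M)\}\V$, so that $\M = \A\B$. Applying Lemma \ref{lem:RotationMultiplication} then gives
\[
\M^{\text{R}_{\frac{\pi}{2}}} = \B^\top \left(\left(\A^\top\right)^{\text{R}_{\frac{3\pi}{2}}}\right)^\top = \left(\diag\{\boldlambda(\M)\}\V\right)^\top \left(\left(\V^{*\top}\right)^{\text{R}_{\frac{3\pi}{2}}}\right)^\top.
\]
To finish, I would use the fact that every diagonal matrix is symmetric, so $\diag\{\boldlambda(\M)\}^\top = \diag\{\boldlambda(\M)\}$. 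Distributing the outer transpose across the product in the first factor then gives $\left(\diag\{\boldlambda(\M)\}\V\right)^\top = \V^\top \diag\{\boldlambda(\M)\}$, and substituting recovers the claimed identity.

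The only nontrivial decision is the choice of grouping. Lemma \ref{lem:RotationMultiplication} is asymmetric in its two factors (an ordinary transpose on the right block, a rotated transpose-transpose on the left block), and only the grouping $(\V^*)(\diag\{\boldlambda(\M)\}\V)$ places the diagonal factor inside the block that is merely transposed, where the symmetry of diagonal matrices allows it to be pulled cleanly to the left. The alternative grouping $(\V^* \diag\{\boldlambda(\M)\})(\V)$ would push the diagonal into the rotated block, and I would then be unable to separate it from $\V^{*\top}$ without a further structural lemma. Beyond making this choice, the argument is a direct substitution; no property of $\M$ is used beyond the existence of the unitary spectral decomposition guaranteed by Hermiticity.
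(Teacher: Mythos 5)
Your proof is correct and follows the same route as the paper: both apply Lemma \ref{lem:RotationMultiplication} to the grouping $\V^{*}\bigl(\diag\{\boldlambda(\M)\}\V\bigr)$ and then use the symmetry of the diagonal factor to pull it out of the transpose. Your write-up simply makes explicit the single sentence the paper gives.
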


\begin{proof} Applying Lemma \ref{lem:RotationMultiplication} to
the factorization $\V^{*}\left(\diag\{\boldlambda(\M)\}\V\right)$
yields the desired result. \end{proof}

\section{Estimating Independent Set Cardinalities in Directed Graphs}

As an application of our method to graph theory, we develop estimates
on the size of the largest independent set in certain directed graphs.
We will consider the inner product on $n\times n$ matrices $\langle\B,\M\rangle=\text{tr}(\B^{*}\M)$,
which has associated norm 
\[
\|\B\|=\|\B\|_{\text{Fro}}=\sqrt{\sum_{0\le j,\ell<n}|\B[j,\ell]|^{2}}.
\]

\begin{defn} Let $\G$ be a directed, unweighted graph on $n$ vertices.
The graph $\mathcal{G}$ has \emph{adjacency matrix} $\B\in\{0,1\}^{n\times n}$
where $\B\left[i,j\right]=1$ if and only if there is a directed edge
from the $i^{th}$ node to the $j^{th}$ node in $\mathcal{G}$. The
graph $\mathcal{G}$ is \emph{$d$-regular} if all row and columns
sums are equal to $d$. The graph $\mathcal{G}$ is \emph{undirected}
if it has symmetric adjacency matrix. \end{defn}

The \emph{Delsarte-Hoffman bound} \cite{Delsarte1973_Algebraic,Hoffman2003_Eigenvalues}
is a classical estimate on the cardinality of the largest independent
set of an undirected, unweighted, $d$-regular graph in terms of the
spectrum of its adjacency matrix. We state it and provide a proof
for completeness.

\begin{thm}\label{thm:Delsarte-HoffmanUndirected}(Undirected Delsarte-Hoffman
Bound). Let $\B\in\{0,1\}^{n\times n}$ be the adjacency matrix of
an undirected $d$-regular graph $\G$ with spectral decomposition
\[
\B^{k}=\U\diag(\boldlambda(\B))^{k}\U^{*},\ k\in\left\{ 0,1\right\} 
\]
where $\boldlambda(\B)\subset\mathbb{R}$. Let $I$ be the indices
of an independent set in $\G$. Then

\begin{equation}
\frac{|I|}{n}\le\frac{-\min\left\{ \boldlambda\left(\B\right)\right\} }{d-\min\left\{ \boldlambda\left(\B\right)\right\} }.\label{eqn:HermitianDelsarte-HoffmanBound}
\end{equation}
\end{thm}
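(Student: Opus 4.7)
The plan is to mimic Hoffman's classical argument. Let $\1_I \in \{0,1\}^{n\times 1}$ denote the indicator vector of the independent set $I$, so that $\|\1_I\|_2^2 = |I|$ and, since $\G$ is undirected and $I$ is independent, $\1_I^* \B \1_I$ counts twice the number of edges inside $I$, which equals $0$. The strategy is to decompose $\1_I$ along the all-ones direction (the Perron eigenvector of $\B$) and its orthogonal complement, and then apply the Rayleigh bound (\ref{eqn:ClassicalRayleigh}) to the orthogonal piece.

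First, I would exploit the $d$-regularity: the vector $\1_{n\times 1}/\sqrt{n}$ is a unit eigenvector of $\B$ with eigenvalue $d$, so we may arrange the spectral decomposition so that $\U[:,0] = \1_{n\times 1}/\sqrt{n}$ and $\lambda_0(\B) = d$. Write $\1_I = \alpha \U[:,0] + \mathbf{v}$ with $\mathbf{v} \perp \U[:,0]$; a direct inner product computation gives $\alpha = |I|/\sqrt{n}$, and by Pythagoras $\|\mathbf{v}\|_2^2 = |I| - |I|^2/n$. Expanding via the spectral decomposition and using orthogonality,
\[
0 = \1_I^* \B \1_I = \alpha^2 d + \mathbf{v}^* \B \mathbf{v},
\]
so $\mathbf{v}^* \B \mathbf{v} = -|I|^2 d/n$.

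Applying (\ref{eqn:ClassicalRayleigh}) to $\mathbf{v}$ yields $\mathbf{v}^* \B \mathbf{v} \geq \min\{\boldlambda(\B)\}\,\|\mathbf{v}\|_2^2$, and combining with the previous identity produces
\[
-\frac{|I|^2 d}{n} \geq \min\{\boldlambda(\B)\}\Bigl(|I| - \frac{|I|^2}{n}\Bigr).
\]
Dividing by $|I| > 0$ (the case $|I|=0$ is trivial) and solving for $|I|/n$ gives (\ref{eqn:HermitianDelsarte-HoffmanBound}).

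The main subtlety is sign bookkeeping in the final step: dividing by $\min\{\boldlambda(\B)\} - d$ to isolate $|I|/n$ requires this quantity to be strictly negative, which holds whenever $\G$ has at least one edge, since $\text{tr}(\B) = 0$ forces some eigenvalue to be strictly negative while $d \geq 0$. Aside from this bookkeeping, the argument is a routine application of the Hermitian Rayleigh quotient estimate, with the key structural ingredient being that $d$-regularity promotes $\1_{n\times 1}/\sqrt{n}$ to a distinguished eigenvector whose contribution to $\1_I^* \B \1_I$ can be explicitly extracted.
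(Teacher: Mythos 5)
Your proof is correct and is essentially the same as the paper's: both decompose $\1_I$ along the all-ones eigenvector (eigenvalue $d$, promoted by $d$-regularity) and its orthogonal complement, apply the Hermitian Rayleigh bound (\ref{eqn:ClassicalRayleigh}) to the orthogonal piece, and solve the resulting quadratic inequality in $|I|$. The paper phrases the orthogonal split via the matrix identity $\B = \frac{d}{n}\1_{n\times n}+\left(\I-\frac{\1_{n\times n}}{n}\right)\B$ together with the inner product $\langle\f\,\f^*,\cdot\rangle$, which is precisely your vector decomposition $\1_I=\alpha\,\U[:,0]+\mathbf{v}$ in a different bookkeeping notation, and both routes arrive at the same final inequality $0\ge\frac{d-\min\{\boldlambda(\B)\}}{n}|I|^2+\min\{\boldlambda(\B)\}\,|I|$.
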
 \begin{proof}

Note that by $d$-regularity, $\B$ has an eigenvalue of $d$; without
loss of generality, let $\lambda_{0}(\B)=d$. Then $\B=\frac{d}{n}\1_{n\times n}+\left(\I-\frac{\1_{n\times n}}{n}\right)\B.$
Thus, for all $\f\in\C^{n\times1}$, 
\[
\left\langle \f\,\f^{*}, \B\right\rangle =d\left\langle \f\,\f^{*}, \frac{\1_{n\times n}}{n}\right\rangle +\left\langle \f\,\f^{*}, \left(\I-\frac{\1_{n\times n}}{n}\right)\B \right\rangle .
\]
Let $\B=\U\diag\{\bold\lambda(\B)\}\U^{*}$ where $\U\U^{*}=\I$.
We analyze the second summand on the right-hand side as follows: 
\begin{align*}
\left\langle \f\,\f^{*},\left(\I-\frac{\1_{n\times n}}{n}\right)\B\right\rangle = & \f^{*}\left(\I-\frac{\1_{n\times n}}{n}\right)\B\f\\
= & \f^{*}\left(\I-\frac{\1_{n\times n}}{n}\right)\U\diag(\boldlambda(\B))\U^{*}\f\\
= & \f^{*}\left(\U\diag(\boldlambda(\B))\U^{*}-\frac{1}{n}\sum_{0\le\ell<n}\lambda_{\ell}(\B)\1_{n\times n}\U[:,\ell]\U^{*}[\ell,:]\right)\f\\
= & \f^{*}\left(\sum_{0\le\ell<n}\lambda_{\ell}(\B)\U[:,\ell]\U^{*}[\ell,:]-\lambda_{0}(\B)\U[:,0]\U^{*}[0,:]\right)\f\\
= & \f^{*}\left(\sum_{1\le\ell<n}\lambda_{\ell}(\B)\U[:,\ell]\U^{*}[\ell,:]\right)\f\\
= & \sum_{1\le\ell<n}\lambda_{\ell}(\B)(\f^{*}\U[:,\ell])^{2}\\
\ge & \min\{\boldlambda(\B)\}\sum_{1\le\ell<n}(\f^{*}\U[:,\ell])^{2}\\
= & \min\{\boldlambda(\B)\}\sum_{1\le\ell<n}\f^{*}\U[:,\ell]\U^{*}[\ell,:]\f\\
= & \min\{\boldlambda(\B)\}\left\langle\f\,\f^{*}, \left(\I-\frac{\1_{n\times n}}{n}\right)\right\rangle .
\end{align*}
Hence, 
\begin{align*}
\left\langle \f\,\f^{*},\B\right\rangle \ge & d\left\langle \f\,\f^{*},\frac{\1_{n\times n}}{n}\right\rangle +\min\left\{ \boldlambda\left(\B\right)\right\} \left\langle \f\,\f^{*},\left(\I-\frac{\1_{n\times n}}{n}\right)\right\rangle \\
= & \frac{d}{n}\|\f\|_{2}^{4}+\min\left\{ \boldlambda\left(\B\right)\right\} \left(\|\f\|_{2}^{2}-\frac{1}{n}\|\f\|_{2}^{4}\right).
\end{align*}
To every independent set $I$, there is a corresponding indicator
vector $\f=\1_{I}$ for which by definition $\langle\B,\f\,\f^{*}\rangle=\f^{*}\B\f=0$.
For such an indicator vector $\f$, it follows that 
\[
0\ge\frac{d}{n}\|\f\|_{2}^{4}-\frac{\min\left\{ \boldlambda\left(\B\right)\right\} }{n}\|\f\|_{2}^{4}+\min\left\{ \boldlambda\left(\B\right)\right\} \|\f\|_{2}^{2}.
\]
Noting that $\left|I\right|=\|\f\|_{2}^{2}$, we get 
\[
0\ge\frac{d}{n}\left|I\right|^{2}-\frac{\min\left\{ \boldlambda\left(\B\right)\right\} }{n}\left|I\right|^{2}+\min\left\{ \boldlambda\left(\B\right)\right\} \left|I\right|.
\]
\[
\implies\frac{\left|I\right|}{n}\le\frac{-\min\left\{ \boldlambda\left(\B\right)\right\} }{d-\min\left\{ \boldlambda\left(\B\right)\right\} }.
\]
thus completing the proof. \end{proof}

The condition on the maximum size on an independent set may be characterized
as the maximum value $\|\1_{I}\|_{2}^{2}$, where $\1_{I}^{*}\B\1_{I}=0$
for some $I\subset V$. The spectral decomposition of $\B$ decouples
the rank-one matrix $\1_{n\times n}$ associated with the eigenvalue
$\lambda_{0}(\B)=d$, from whence the analysis flows. Note that the
Delsarte-Hoffman bound is sharp for the complete bipartite graph having
$n$ vertices in each partition, since in this case 
\[
\max_{I\text{ independent}}|I|=n,\min\{\boldlambda(\B)\}=-n,d=n.
\]


\subsection{Directed Delsarte-Hoffman Bound}

We now consider independent sets in \emph{directed} regular graphs
and broaden the scope to adjacency matrices whose entries are not
necessarily binary.

\begin{defn} Let $\mathcal{G}$ be a directed graph with $n$ nodes.
A matrix $\B\in\mathbb{C}^{n\times n}$ is a \emph{pseudo-adjacency
matrix} for $\mathcal{G}$ if $\B_{ij}=0$ whenever there is no directed
edge from the $i^{th}$ node to the $j^{th}$ node in $\mathcal{G}$.
The pseudo-adjacency matrix $\B$ is said to be \emph{$\delta$-regular}
if all row and column sums of $\B$ are equal to $\delta$. \end{defn}

We develop Delsarte-Hoffman-type bounds based on the spectral decomposition
and the singular value decomposition of the (non-Hermitian) pseudo-adjacency
matrix.

\begin{thm}\label{thm:Delsarte-HoffmanDirected}

Let $\B\in\mathbb{C}^{n\times n}$ denote a $\delta$-regular pseudo-adjacency
matrix for a graph $\G$. Let $\B$ be decomposed as $\B=\U\diag\left\{ \boldlambda\left(\B\right)\right\} \V^{*}$
where $\U\V^{*}=\I$. Let $\lambda_{\ell}(\B)=\alpha_{\ell}e^{i\theta_{\ell}}$,
$\alpha_{\ell}\in\mathbb{R},\theta_{\ell}\in[0,2\pi)$ be a polar
form of the $\ell$-th eigenvalue of $\B$. Let $\boldalpha(\B)=(\alpha_{0},\dots,\alpha_{n-1})$.
Let $\f,\g\in\{0,1\}^{n\times1}$ be such that there exist $\FF,\GG\in\mathbb{C}^{n\times1}$
satisfying 
\begin{align}
\f=\V\overline{\FF},\ \g=\U\GG\mbox{ such that }\forall0\le\ell<n,\:\left(\FF\left[\ell\right]\GG\left[\ell\right]e^{i\theta_{\ell}}\right)\ge0.\label{eqn:AdmissibilityConditionEigDecomp}
\end{align}
Then if $\f$ and $\g$ denote respectively indicator vectors for
rows and columns associated with a rectangular 0 block in $\B$, 
\[
\frac{\|\f\|_{2}^{2}\|\g\|_{2}^{2}}{n}\le\dfrac{-\min\{\boldalpha(\B)\}\sum\limits _{0\le\ell<n}\FF\left[\ell\right]\GG\left[\ell\right]e^{i\theta_{\ell}}}{\delta-\min\{\boldalpha(\B)\}}.
\]
\end{thm}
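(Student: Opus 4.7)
The plan is to parallel the proof of Theorem~\ref{thm:Delsarte-HoffmanUndirected}, substituting the biorthogonal decomposition $\B = \U\diag\{\boldlambda(\B)\}\V^{*}$ (with $\U\V^{*} = \I$) for the Hermitian spectral decomposition, and invoking admissibility (\ref{eqn:AdmissibilityConditionEigDecomp}) exactly where Hermiticity was used in the classical argument. Note that from $\U\V^{*} = \I$ with $\U,\V$ square it also follows that $\V^{*}\U = \I$, so $\{\U[:,\ell]\}$ and $\{\V[:,\ell]\}$ form biorthogonal dual bases.

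The first step is to isolate the rank-one component of $\B$ attached to the regularity eigenvalue. Since $\delta$-regularity gives $\B\mathbf{1} = \delta\mathbf{1}$ and $\mathbf{1}^{\top}\B = \delta\mathbf{1}^{\top}$, we may take $\lambda_{0}(\B) = \delta$ with $\U[:,0] = \V[:,0] = \mathbf{1}/\sqrt{n}$, so that $\delta\,\U[:,0]\V[:,0]^{*} = \tfrac{\delta}{n}\1_{n\times n}$ is the $\ell = 0$ summand in the outer-product expansion $\B = \sum_{\ell}\lambda_{\ell}(\B)\U[:,\ell]\V[:,\ell]^{*}$.

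Next, exactly as in the proof of Theorem~\ref{thm:Complex_Rayleigh_quotient}, substituting $\f = \V\overline{\FF}$ and $\g = \U\GG$ and invoking $\V^{*}\U = \I$ collapses
\[
\f^{*}\B\g \;=\; \FF^{\top}\diag\{\boldlambda(\B)\}\GG \;=\; \sum_{0\le\ell<n}\alpha_{\ell}\bigl(\FF[\ell]\GG[\ell]e^{i\theta_{\ell}}\bigr).
\]
For the $\{0,1\}$-vectors $\f,\g$, the identifications $\U[:,0] = \V[:,0] = \mathbf{1}/\sqrt{n}$ give $\FF[0] = \|\f\|_{2}^{2}/\sqrt{n}$ and $\GG[0] = \|\g\|_{2}^{2}/\sqrt{n}$ (using $\|\cdot\|_{1} = \|\cdot\|_{2}^{2}$ on $\{0,1\}$-vectors), so the $\ell = 0$ summand equals $\tfrac{\delta}{n}\|\f\|_{2}^{2}\|\g\|_{2}^{2}$. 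The admissibility hypothesis supplies $\FF[\ell]\GG[\ell]e^{i\theta_{\ell}} \ge 0$, so each remaining summand is bounded below by $\min\{\boldalpha(\B)\}\cdot\FF[\ell]\GG[\ell]e^{i\theta_{\ell}}$.

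Finally, since $\f$ and $\g$ index a rectangular $0$-block of $\B$, one has $\f^{*}\B\g = 0$. Assembling the pieces yields
\[
0 \;\ge\; \frac{\delta - \min\{\boldalpha(\B)\}}{n}\,\|\f\|_{2}^{2}\|\g\|_{2}^{2} \;+\; \min\{\boldalpha(\B)\}\sum_{0\le\ell<n}\FF[\ell]\GG[\ell]e^{i\theta_{\ell}},
\]
which rearranges to the claimed bound once we use $\delta - \min\{\boldalpha(\B)\} > 0$. The main obstacle is the bookkeeping at the trivial eigenvalue: verifying that the joint choice $\U[:,0] = \V[:,0] = \mathbf{1}/\sqrt{n}$ is compatible with $\U\V^{*} = \I$, and then correctly matching $\FF[0]\GG[0]e^{i\theta_{0}}$ with $\|\f\|_{2}^{2}\|\g\|_{2}^{2}/n$. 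Once this identification is in hand, the rest is the undirected argument with admissibility substituted for Hermiticity.
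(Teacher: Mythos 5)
Your proposal is correct and follows essentially the same route as the paper's proof: isolate the regularity eigenvalue $\lambda_0 = \delta$ with $\U[:,0] = \V[:,0] = \mathbf{1}/\sqrt{n}$, expand $\f^{*}\B\g = \sum_{\ell}\alpha_{\ell}\FF[\ell]\GG[\ell]e^{i\theta_{\ell}}$ via $\V^{*}\U = \I$, use admissibility to lower-bound the $\ell\ge 1$ terms by $\min\{\boldalpha(\B)\}$, identify $\FF[0]\GG[0] = \|\f\|_{2}^{2}\|\g\|_{2}^{2}/n$, and set $\f^{*}\B\g = 0$ for the zero-block indicators. The only cosmetic difference is that the paper routes the computation through the explicit decomposition $\B = \tfrac{\delta}{n}\1_{n\times n} + (\I - \tfrac{\1_{n\times n}}{n})\B$ and the Frobenius inner product $\langle\f\,\g^{*},\cdot\rangle$, whereas you separate the $\ell=0$ summand directly; both are the same calculation.
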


\begin{proof}

Note that by $\delta$-regularity, $\B$ has an eigenvalue of $\delta$;
without loss of generality, let $\lambda_{0}(\B)=\delta$. Then we
have that $\U[:,0]=\V[:,0]=\frac{1}{\sqrt{n}}(1,1,\dots,1)^{\top}$.
Hence, $\B=\frac{\delta}{n}\1_{n\times n}+\left(\I-\frac{\1_{n\times n}}{n}\right)\B.$
Thus, for all $\f,\g\in\C^{n\times1}$ subject to (\ref{eqn:AdmissibilityConditionEigDecomp}),
\[
\left\langle \f\,\g^{*},\B\right\rangle =\delta\left\langle \f\,\g^{*},\frac{\1_{n\times n}}{n}\right\rangle +\left\langle \f\,\g^{*},\left(\I-\frac{\1_{n\times n}}{n}\right)\B\right\rangle .
\]

We analyze the second term of the right hand side as follows:

\begin{align*}
\left\langle \f\,\g^{*},\left(\I-\frac{\1_{n\times n}}{n}\right)\B\right\rangle = & \f^{*}\left(\I-\frac{\1_{n\times n}}{n}\right)\B\g\\
= & \f^{*}\left(\I-\frac{\1_{n\times n}}{n}\right)\U\diag(\boldlambda(\B))\V^{*}\g\\
= & \f^{*}\left(\U\diag(\boldlambda(\B))\V^{*}-\frac{1}{n}\sum_{0\le\ell<n}\lambda_{\ell}(\B)\1_{n\times n}\U[:,\ell]\V^{*}[\ell,:]\right)\g\\
= & \f^{*}\left(\sum_{0\le\ell<n}\lambda_{\ell}(\B)\U[:,\ell]\V^{*}[\ell,:]-\lambda_{0}(\B)\U[:,0]\V^{*}[0,:]\right)\g\\
= & \f^{*}\left(\sum_{1\le\ell<n}\lambda_{\ell}(\B)\U[:,\ell]\V^{*}[\ell,:]\right)\g\\
= & \FF^{\top}\V^{*}\left(\sum_{1\le\ell<n}\lambda_{\ell}(\B)\U[:,\ell]\V^{*}[\ell,:]\right)\U\GG\\
= & \sum_{1\le\ell<n}\FF\left[\ell\right]\lambda_{\ell}(\B)\GG\left[\ell\right]\\
= & \sum_{1\le\ell<n}\alpha_{\ell}\FF\left[\ell\right]\GG\left[\ell\right]e^{i\theta_{\ell}}\\
\ge & \min\{\boldalpha(\B)\}\sum_{1\le\ell<n}\FF\left[\ell\right]\GG\left[\ell\right]e^{i\theta_{\ell}}\\
= & \min\{\boldalpha(\B)\}\left(-\FF\left[0\right]\GG\left[0\right]+\sum_{0\le\ell<n}\FF\left[\ell\right]\GG\left[\ell\right]e^{i\theta_{\ell}}\right)\\
= & \min\{\boldalpha(\B)\}\left(-\frac{1}{n}\|\f\|_{2}^{2}\|\g\|_{2}^{2}+\sum_{0\le\ell<n}\FF\left[\ell\right]\GG\left[\ell\right]e^{i\theta_{\ell}}\right).\\
\end{align*}
Note that $\FF\left[0\right]=\frac{1}{\sqrt{n}}\|\f\|_{1}=\frac{1}{\sqrt{n}}\|\f\|_{2}^{2}$
follows from the observation that $\f$ takes values in $\{0,1\},\U^{*}\f=\U^{*}\V\overline{\FF}=\overline{\FF}$,
and $\U[:,0]=\frac{1}{\sqrt{n}}(1,\dots,1)^{\top}$. That $\GG\left[0\right]=\frac{1}{\sqrt{n}}\|\g\|_{2}^{2}$
follows similarly. Hence, 
\begin{align*}
\left\langle \f\,\g^{*},\B\right\rangle \ge & \delta\left\langle \f\,\g^{*},\frac{\1_{n\times n}}{n}\right\rangle +\min\{\boldalpha(\B)\}\left(-\frac{1}{n}\|\f\|_{2}^{2}\|\g\|_{2}^{2}+\sum_{0\le\ell<n}\FF\left[\ell\right]\GG\left[\ell\right]e^{i\theta_{\ell}}\right)\\
= & \frac{\delta}{n}\|\f\|_{2}^{2}\|\g\|_{2}^{2}+\min\{\boldalpha(\B)\}\left(-\frac{1}{n}\|\f\|_{2}^{2}\|\g\|_{2}^{2}+\sum_{0\le\ell<n}\FF\left[\ell\right]\GG\left[\ell\right]e^{i\theta_{\ell}}\right).\\
\end{align*}
Since $\f$ and $\g$ are indicator vectors for rows and columns associated
with a rectangular 0 block in $\B$, $\f^{*}\B\g=0$. It follows that
\[
0\ge\frac{\delta}{n}\|\f\|_{2}^{2}\|\g\|_{2}^{2}+\min\{\boldalpha(\B)\}\left(-\frac{1}{n}\|\f\|_{2}^{2}\|\g\|_{2}^{2}+\sum_{0\le\ell<n}\FF\left[\ell\right]\GG\left[\ell\right]e^{i\theta_{\ell}}\right),
\]
from whence the result follows by algebraic manipulation.

\end{proof}

In the case that $\B$ is the adjacency matrix of a $d$-regular graph,
the result may be interpreted as a generalization of Theorem \ref{thm:Delsarte-HoffmanUndirected}.

\begin{cor}(Directed Delsarte-Hoffman Bound)\label{cor:Delsarte-HoffmanDirected}
Let $\B\in\{0,1\}^{n\times n}$ denote a $d$-regular adjacency matrix
for a graph $\G$ on $n$ vertices. Let $\B$ be decomposed as $\B=\U\diag\left\{ \boldlambda\left(\B\right)\right\} \V^{*}$
where $\U\V^{*}=\I$. Let $\lambda_{\ell}(\B)=\alpha_{\ell}e^{i\theta_{\ell}},$
$\alpha_{\ell}\in\mathbb{R},\theta_{\ell}\in[0,2\pi)$ be a polar
decomposition of $\boldlambda(\B)$. Let $\boldalpha(\B)=(\alpha_{1},\dots,\alpha_{n})$.
Let $\f,\g\in\{0,1\}^{n\times1}$ be such that there exist $\FF,\GG\in\mathbb{C}^{n\times1}$
satisfying 
\begin{align}
\f=\V\overline{\FF},\ \g=\U\GG\mbox{ such that }\forall0\le\ell<n,\:\left(\FF\left[\ell\right]\GG\left[\ell\right]e^{i\theta_{\ell}}\right)\ge0.\label{eqn:AdmissibilityConditionEigDecomp}
\end{align}
Then if $\f$ and $\g$ denote respectively indicator vectors for
rows and columns associated with a rectangular 0 block in $\B$, 
\[
\frac{\|\f\|_{2}^{2}\|\g\|_{2}^{2}}{n}\le\dfrac{-\min\{\boldalpha(\B)\}\sum\limits _{0\le\ell<n}\FF\left[\ell\right]\GG\left[\ell\right]e^{i\theta_{\ell}}}{d-\min\{\boldalpha(\B)\}}.
\]
\end{cor}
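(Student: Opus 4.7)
The plan is to derive Corollary \ref{cor:Delsarte-HoffmanDirected} as an immediate specialization of Theorem \ref{thm:Delsarte-HoffmanDirected}. The essential observation is that a $d$-regular binary adjacency matrix $\B \in \{0,1\}^{n\times n}$ of a directed graph $\G$ is automatically a $d$-regular pseudo-adjacency matrix in the sense of the preceding definition, with $\delta = d$. Indeed, the zero entries of $\B$ correspond exactly to the absence of directed edges, which is precisely the structural requirement on a pseudo-adjacency matrix, and $d$-regularity of $\G$ forces all row and column sums of $\B$ to equal $d$.

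With this identification, every hypothesis of Theorem \ref{thm:Delsarte-HoffmanDirected} is satisfied verbatim by the hypotheses of the corollary: the decomposition $\B = \U\diag\{\boldlambda(\B)\}\V^*$ with $\U\V^* = \I$, the polar form $\lambda_\ell(\B) = \alpha_\ell e^{i\theta_\ell}$, the real vector $\boldalpha(\B)$, the admissibility condition on the pair $(\f,\g)$, and the requirement that $\f,\g \in \{0,1\}^{n\times 1}$ be indicators of rows and columns associated with a rectangular $0$ block in $\B$ all transfer directly. Substituting $\delta = d$ into the conclusion of Theorem \ref{thm:Delsarte-HoffmanDirected} produces exactly the asserted bound.

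There is essentially no technical obstacle here, since the substantive argument has already been carried out at the more general pseudo-adjacency level. The only point worth noting is a purely cosmetic discrepancy in the indexing of $\boldalpha(\B)$ between the theorem (indices $0,\dots,n-1$) and the corollary (indices $1,\dots,n$); this does not affect the value of $\min\{\boldalpha(\B)\}$ appearing in the bound. All structural ingredients used in the proof of Theorem \ref{thm:Delsarte-HoffmanDirected}—namely, the regularity condition fixing $\lambda_0(\B) = \delta$ with corresponding constant left/right singular vectors $\U[:,0] = \V[:,0] = \tfrac{1}{\sqrt{n}}(1,\dots,1)^\top$, the identities $\FF[0] = \tfrac{1}{\sqrt{n}}\|\f\|_2^2$ and $\GG[0] = \tfrac{1}{\sqrt{n}}\|\g\|_2^2$ (consequences of the $\{0,1\}$ structure of $\f,\g$ combined with $\U\V^* = \I$), and the vanishing $\f^*\B\g = 0$ on the indicator vectors of a rectangular $0$ block—remain available with $\delta$ replaced by $d$, so the conclusion follows by direct substitution.
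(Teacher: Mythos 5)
Your proof is correct and matches the paper's intent exactly: the corollary is an immediate specialization of Theorem \ref{thm:Delsarte-HoffmanDirected} to the case of a $\{0,1\}$-valued $d$-regular adjacency matrix (which is precisely a $\delta$-regular pseudo-adjacency matrix with $\delta = d$), and the paper itself offers no separate proof. Your observation about the harmless indexing discrepancy in $\boldalpha(\B)$ is a fair reading of what appears to be a typographical inconsistency in the statement.
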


Note that the quantity bounded in Corollary \ref{cor:Delsarte-HoffmanDirected}
may be interpreted as the geometric average of the size of independent
set with respect to ``in'' and ``out'' nodes. Indeed, if $\f=\1_{I_{\text{row}}},\g=\1_{I_{\text{col}}}$
are the indicator functions for the in (row) and out (column) vertices
of a directed independent set, then $\|\f\|_{2}\|\g\|_{2}=\sqrt{\left|I_{\text{row}}\right|\left|I_{\text{col}}\right|}$.

If $\B$ is Hermitian and $\f=\g$, the admissibility condition (\ref{eqn:AdmissibilityConditionEigDecomp})
is always satisfied as $\theta_{\ell}=0,\ell=0,\dots,n-1$. Indeed,
in this case, 
\[
\sum_{0\le\ell<n}\FF\left[\ell\right]\GG\left[\ell\right]e^{i\theta_{\ell}}=\|\f\|_{2}^{2},
\]
so that the conclusion of Theorem \ref{thm:Delsarte-HoffmanUndirected}
holds. Hence, Corollary \ref{cor:Delsarte-HoffmanDirected} is a strict
generalization of the classical Delsarte-Hoffman inequality.

\subsubsection{Tightness of Directed Delsarte-Hoffman Bound}

When $n$ is a multiple of 4, adjacency matrices of the form 
\[
\left(\begin{array}{rrrr}
0 & 0 & 1 & 0\\
0 & 0 & 0 & 1\\
0 & 1 & 0 & 0\\
1 & 0 & 0 & 0
\end{array}\right)\otimes\1_{\frac{n}{4}\times\frac{n}{4}}
\]

show Corollary \ref{cor:Delsarte-HoffmanDirected} is tight. Indeed,
when $n=4$, this is a 1-regular directed graph with non-Hermitian
adjacency matrix $\B$, which may be decomposed as:

\[
\left(\begin{array}{rrrr}
-1/2 & -i/2 & i/2 & -1/2\\
-1/2 & -i/2 & -i/2 & -1/2\\
1/2 & 1/2 & 1/2 & -1/2\\
1/2 & -1/2 & -1/2 & -1/2
\end{array}\right)\left(\begin{array}{rrrr}
-1 & 0 & 0 & 0\\
0 & i & 0 & 0\\
0 & 0 & -i & 0\\
0 & 0 & 0 & 1
\end{array}\right)\left(\begin{array}{rrrr}
-1/2 & -i/2 & i/2 & -1/2\\
-1/2 & -i/2 & -i/2 & -1/2\\
1/2 & 1/2 & 1/2 & -1/2\\
1/2 & -1/2 & -1/2 & -1/2
\end{array}\right)^{*}.
\]

In this case, the largest independent set has size 2, corresponding
to the zero block on the upper left and lower right of the matrix.
Let $\f=\g=(1,1,0,0)^{\top}$, so that $\FF=\GG=\left(-1,0,0,-1\right)^{\top}$.
Decomposing the first and fourth eigenvalues as $\alpha_{0}=-1,\alpha_{3}=1,\theta_{0}=\theta_{3}=0$,
it is seen that the admissibility condition is satisfied, and that
$\min\{\boldalpha\}=-1$. Moreover, 
\[
\sum_{0\le\ell<4}\FF\left[\ell\right]\GG\left[\ell\right]e^{i\theta_{\ell}}=2
\]

so that the estimate of Corollary \ref{cor:Delsarte-HoffmanDirected}
is 
\[
\frac{\|\f\|_{2}^{2}\|\g\|_{2}^{2}}{4}\le1,
\]
which is tight since $\|\f\|_{2}\|\g\|_{2}=2$. A similar argument
holds for the block corresponding to indicator functions $\f=\g=(0,0,1,1)^{\top}$.
Together, this shows the maximal independent set of this directed
graph is tightly estimated by Corollary \ref{cor:Delsarte-HoffmanDirected}.

\subsection{A Delsarte-Hoffman Bound Using the Singular Value Decomposition}

Consider the singular value decomposition of $\B\in\mathbb{R}^{n\times n}$
expressed by 
\[
\B=\U\diag(\boldsigma(\B))\V^{*}\;\text{ s.t.}\;\U\U^{*}=\I=\V\V^{*},
\]
where each element of $\boldsigma(\B)$ is positive. Theorem \ref{thm:Delsarte-HoffmanSVD}
provides a Delsarte-Hoffman estimate on the size of the independent
set using the SVD, which holds for \emph{all matrices}, not just diagonalizable
ones.

\begin{thm}\label{thm:Delsarte-HoffmanSVD}Let $\B\in\mathbb{C}^{n\times n}$
be a $\delta$-regular pseudo-adjacency matrix of a directed graph.
Suppose $\B$ has a decomposition $\B=\U\diag\left(\boldsigma\right)\V^{*}$
such that $\U\U^{*}=\I=\V\V^{*}$ and $\boldsigma=(\sigma_{1},\dots,\sigma_{n})$.
Let $\sigmamin=\min_{\ell}\sigma_{\ell}$. Suppose that $\f,\g\in\{0,1\}^{n\times1}$
correspond to the indicator sets for row and column indices respectively
of a rectangular zero block, and that there exist $\FF,\GG\in\mathbb{C}^{n\times1}$
such that 
\begin{align}
\f=\U\overline{\FF},\ \g=\V\GG,\mbox{ and }\forall0\le\ell<n,\:\FF\left[\ell\right]\GG\left[\ell\right]\ge0.\label{eqn:AdmissibilityConditionSVD}
\end{align}
Then 
\[
\frac{\|\f\|_{2}^{2}\|\g\|_{2}^{2}}{n}\le\frac{-\sigmamin\underset{0\le\ell<n}{\sum}\FF\left[\ell\right]\GG\left[\ell\right]}{\delta-\sigmamin}.
\]
\end{thm}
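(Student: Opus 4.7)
The plan is to mirror the argument for Theorem \ref{thm:Delsarte-HoffmanDirected}, replacing the eigendecomposition with the (signed) SVD. By $\delta$-regularity, $\B\mathbf{1}=\delta\mathbf{1}$ and $\mathbf{1}^{\top}\B=\delta\mathbf{1}^{\top}$, so $\mathbf{1}/\sqrt{n}$ is simultaneously a right and left singular vector with singular value $\delta$. Relabeling indices if necessary, I would assume $\sigma_{0}=\delta$ and $\U[:,0]=\V[:,0]=\frac{1}{\sqrt{n}}(1,\dots,1)^{\top}$. Aligning the distinguished singular triple with the constant direction is what makes the rank-one correction $\frac{\1_{n\times n}}{n}$ exactly cancel the $\ell=0$ SVD summand in the next step.

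Next, I would write $\B=\frac{\delta}{n}\1_{n\times n}+\bigl(\I-\frac{\1_{n\times n}}{n}\bigr)\B$ and compute $\langle\f\g^{*},\B\rangle=\f^{*}\B\g$ as a sum of two terms. Since $\f,\g\in\{0,1\}^{n\times1}$, the first is $\frac{\delta}{n}(\mathbf{1}^{\top}\f)(\mathbf{1}^{\top}\g)=\frac{\delta}{n}\|\f\|_{2}^{2}\|\g\|_{2}^{2}$. For the second term, I substitute the SVD, use $\U[:,0]=\V[:,0]=\mathbf{1}/\sqrt{n}$ to kill the $\ell=0$ summand, and convert to admissible coordinates using $\f=\U\overline{\FF}$, $\g=\V\GG$ together with $\U^{*}\U=\V^{*}\V=\I$. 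This reduces the residual to $\sum_{\ell\geq1}\sigma_{\ell}\FF[\ell]\GG[\ell]$, which the admissibility condition $\FF[\ell]\GG[\ell]\geq0$ allows me to lower-bound by $\sigmamin\sum_{\ell\geq1}\FF[\ell]\GG[\ell]=\sigmamin\bigl(\sum_{\ell=0}^{n-1}\FF[\ell]\GG[\ell]-\FF[0]\GG[0]\bigr)$.

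To evaluate the $\ell=0$ correction, I would note that $\overline{\FF}=\U^{*}\f$ (since $\U$ has orthonormal columns), so $\FF[0]=\U[:,0]^{*}\f=\mathbf{1}^{\top}\f/\sqrt{n}=\|\f\|_{1}/\sqrt{n}=\|\f\|_{2}^{2}/\sqrt{n}$, where the last equality uses that $\f$ is $\{0,1\}$-valued; analogously $\GG[0]=\|\g\|_{2}^{2}/\sqrt{n}$. Finally, since $\f,\g$ are the row/column indicators of a rectangular zero block, $\f^{*}\B\g=0$, and assembling everything gives
\[
0\geq\frac{\delta-\sigmamin}{n}\|\f\|_{2}^{2}\|\g\|_{2}^{2}+\sigmamin\sum_{0\leq\ell<n}\FF[\ell]\GG[\ell],
\]
from which the stated bound follows by algebraic rearrangement (under the natural assumption $\delta>\sigmamin$). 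The only real subtlety, rather than a genuine obstruction, is that $\sigmamin$ must be interpreted in the signed-SVD sense of Theorem \ref{thm:Complex_Rayleigh_quotient-1}, so that $-\sigmamin\sum_{\ell}\FF[\ell]\GG[\ell]$ is nonnegative and the bound is nonvacuous; otherwise the argument is a structural transcription of the eigendecomposition proof, with the simplification that full bi-unitarity $\U\U^{*}=\V\V^{*}=\I$ is available without any admissibility assumption on $\U$ and $\V$ themselves.
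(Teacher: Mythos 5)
Your proof is correct and follows essentially the same route as the paper's: decompose $\B=\frac{\delta}{n}\1_{n\times n}+\left(\I-\frac{\1_{n\times n}}{n}\right)\B$, expand the second term in the signed SVD, cancel the $\ell=0$ summand using $\U[:,0]=\V[:,0]=\frac{1}{\sqrt{n}}\1_{n\times 1}$, lower-bound the residual via admissibility and $\sigmamin$, evaluate $\FF[0],\GG[0]$ using the $\{0,1\}$-valuedness of $\f,\g$, and close with $\f^{*}\B\g=0$. Your added remarks (that $\sigma_{0}=\delta$ with constant singular vectors must be arranged, that $\sigmamin$ is to be read in the signed-SVD sense so the bound is nonvacuous, and that $\delta>\sigmamin$ is needed in the final rearrangement) are reasonable clarifications of points the paper leaves implicit, not departures from its argument.
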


\begin{proof} By the SVD and by $\delta$-regularity, 
\[
\B=\frac{\delta}{n}\1_{n\times n}+\left(\I-\frac{\1_{n\times n}}{n}\right)\B.
\]
Analyzing the second term for all $\f,\g\in\C^{n\times1}$, subject
to (\ref{eqn:AdmissibilityConditionSVD}), 
\begin{align*}
\left\langle \f\,\g^{*},\left(\I-\frac{\1_{n\times n}}{n}\right)\B\right\rangle = & \f^{*}\left(\I-\frac{\1_{n\times n}}{n}\right)\left(\sum_{0\le\ell<n}\sigma_{\ell}\U[:,\ell]\V^{*}[\ell,:]\right)\g\\
= & \f^{*}\left(\sum_{1\le\ell<n}\sigma_{\ell}\U[:,\ell]\V^{*}[\ell,:]\right)\g\\
= & \FF^{\top}\U^{*}\left(\sum_{1\le\ell<n}\sigma_{\ell}\U[:,\ell]\V^{*}[\ell,:]\right)\V\GG\\
= & \sum_{1\le\ell<n}\sigma_{\ell}\FF\left[\ell\right]\GG\left[\ell\right]\\
\ge & \sigmamin\sum_{1\le\ell<n}\FF\left[\ell\right]\GG\left[\ell\right]\\
= & \sigmamin\left(-\FF\left[0\right]\GG\left[0\right]+\sum_{0\le\ell<n}\FF\left[\ell\right]\GG\left[\ell\right]\right)\\
= & \sigmamin\left(-\frac{1}{n}\|\f\|_{2}^{2}\|\g\|_{2}^{2}+\sum_{0\le\ell<n}\FF\left[\ell\right]\GG\left[\ell\right]\right).
\end{align*}

Note that $\FF\left[0\right]=\frac{1}{\sqrt{n}}\|\f\|_{1}=\frac{1}{\sqrt{n}}\|\f\|_{2}^{2}$
follows from $\U^{*}\f=\U^{*}\U\overline{\FF}=\FF$ and that fact
that $\f\in\{0,1\}^{n\times1}$, $\U[:,0]=\frac{1}{\sqrt{n}}(1,\dots,1)^{\top}$;
$\GG\left[0\right]=\frac{1}{\sqrt{n}}\|\g\|_{2}^{2}$ follows similarly.
Thus, 
\begin{align*}
\left\langle \f\,\g^{*},\B\right\rangle = & \delta\left\langle \f\,\g^{*},\frac{\1_{n\times n}}{n}\right\rangle +\left\langle \f\,\g^{*},\left(\I-\frac{\1_{n\times n}}{n}\right)\B\right\rangle \\
\ge & \delta\left\langle \f\,\g^{*},\frac{\1_{n\times n}}{n}\right\rangle +\sigmamin\left(-\frac{1}{n}\|\f\|_{2}^{2}\|\g\|_{2}^{2}+\sum_{0\le\ell<n}\FF\left[\ell\right]\GG\left[\ell\right]\right)\\
= & \frac{\delta}{n}\|\f\|_{2}^{2}\|\g\|_{2}^{2}+\sigmamin\left(-\frac{1}{n}\|\f\|_{2}^{2}\|\g\|_{2}^{2}+\sum_{0\le\ell<n}\FF\left[\ell\right]\GG\left[\ell\right]\right).
\end{align*}
Every zero block $I$ is specified by a pair of indicator vectors
$\f=\1_{I_{L}}$ and $\g=\1_{I_{R}}$ such that $\f^{*}\B\g=0$. For
such an $\f$, $\g$ pair also subject to the admissibility condition
(\ref{eqn:AdmissibilityConditionSVD}) we have

\[
0\ge\delta\left(\frac{\|\f\|_{2}^{2}\|\g\|_{2}^{2}}{n}\right)-\sigmamin\left(\frac{\|\f\|_{2}^{2}\|\g\|_{2}^{2}}{n}\right)+\sigmamin\sum_{0\le\ell<n}\FF\left[\ell\right]\GG\left[\ell\right],
\]
from whence the result follows by algebraic manipulation.

\end{proof}

Theorem \ref{thm:Delsarte-HoffmanSVD} requires a decomposition which
bears resemblance to the SVD in the fact that $\B=\U\diag(\boldsymbol{\sigma})\V^{*}$,
where $\U\U^{*}=\I=\V\V^{*}$, but without the condition that $\sigma_{\ell}\ge0$
for all $\ell$. Note that if $\sigma_{\ell}\mapsto-\sigma_{\ell}$,
and $\U[:,\ell]\mapsto-\U[:,\ell]$ or $\V^{*}[\ell,:]\mapsto-\V^{*}[\ell,:]$,
this still expresses such a decomposition for $\B$. In this sense,
there are $2^{n}$ decompositions to consider in Theorem \ref{thm:Delsarte-HoffmanSVD},
corresponding to the $2^{n}$ possible sign assignments. Thus, one
can think of the decomposition in Theorem \ref{thm:Delsarte-HoffmanSVD}
as a (non-unique) signed SVD, and the condition (\ref{eqn:AdmissibilityConditionSVD})
as an admissibility condition with respect to this decomposition.

If in particular $\B$ is the adjacency matrix of a $d$-regular graph,
the following result holds.

\begin{cor}\label{cor:Delsarte-HoffmanSVD}Let $\B\in\{0,1\}^{n\times n}$
be a $d$-regular adjacency matrix of a directed graph. Suppose $\B$
has a decomposition $\B=\U\diag\left(\boldsigma\right)\V^{*}$ such
that $\U\U^{*}=\I=\V\V^{*}$ and $\boldsigma=(\sigma_{1},\dots,\sigma_{n})$.
Let $\sigmamin=\min_{\ell}\sigma_{\ell}$. Suppose that $\f\in\{0,1\}^{n\times1},\g\in\{0,1\}^{n\times1}$
correspond to the indicator sets for row and column indices respectively
of a rectangular zero block, and that there exist $\FF,\GG\in\mathbb{C}^{n\times1}$
such that 
\begin{align}
\f=\U\overline{\FF},\ \g=\V\GG,\mbox{ and }\forall0\le\ell<n,\:\FF\left[\ell\right]\GG\left[\ell\right]\ge0.\label{eqn:AdmissibilityConditionSVD}
\end{align}
Then 
\[
\frac{\|\f\|_{2}^{2}\|\g\|_{2}^{2}}{n}\le\frac{-\sigmamin\underset{0\le\ell<n}{\sum}\FF\left[\ell\right]\GG\left[\ell\right]}{d-\sigmamin}.
\]
\end{cor}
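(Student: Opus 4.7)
The plan is to observe that Corollary \ref{cor:Delsarte-HoffmanSVD} is a direct specialization of Theorem \ref{thm:Delsarte-HoffmanSVD}. Every $d$-regular adjacency matrix $\B \in \{0,1\}^{n \times n}$ is in particular a $\delta$-regular pseudo-adjacency matrix with $\delta = d$, since for an adjacency matrix we have $\B[i,j] = 0$ whenever there is no directed edge from the $i^{\text{th}}$ to $j^{\text{th}}$ node (trivially), and the $d$-regularity hypothesis yields constant row and column sums equal to $d$. Thus the matrix $\B$ falls within the scope of Theorem \ref{thm:Delsarte-HoffmanSVD}.

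Next, I would verify that the hypotheses on the decomposition and the admissibility condition (\ref{eqn:AdmissibilityConditionSVD}) are identical in both statements: the signed SVD $\B = \U \diag(\boldsigma) \V^*$ with $\U\U^* = \I = \V\V^*$, and the requirement that $\f, \g \in \{0,1\}^{n\times1}$ are indicator vectors for the row and column index sets of a rectangular zero block satisfying $\f = \U\overline{\FF}$, $\g = \V\GG$, with $\FF[\ell]\GG[\ell] \ge 0$ for all $\ell$. These match verbatim.

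Applying Theorem \ref{thm:Delsarte-HoffmanSVD} with $\delta = d$ then yields
\[
\frac{\|\f\|_2^2 \|\g\|_2^2}{n} \le \frac{-\sigmamin \sum_{0 \le \ell < n} \FF[\ell]\GG[\ell]}{d - \sigmamin},
\]
which is exactly the conclusion of the corollary. No further calculation is required, and there is no substantive obstacle: the only subtlety worth flagging is that the ``$\{0,1\}$-valued adjacency'' hypothesis plays no additional role in the bound itself beyond what the pseudo-adjacency framework already provides, but it is precisely this specialization that recovers a classical graph-theoretic interpretation of the estimate.
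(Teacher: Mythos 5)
Your argument is correct and coincides with the paper's intent: the paper prefaces the corollary with the remark that it is the specialization of Theorem \ref{thm:Delsarte-HoffmanSVD} to the case where $\B$ is a $\{0,1\}$-valued $d$-regular adjacency matrix (hence a $d$-regular pseudo-adjacency matrix), and gives no further proof. Your verification that the hypotheses align and that setting $\delta = d$ yields the stated bound is exactly the right reasoning.
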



\section{Discussion and Future Research}

This article proposes generalizations of classical linear algebraic
and spectral graph theoretic results to the case in which the underlying
matrix $\B$ is non-Hermitian. This is done by constraining certain
vectors to satisfy admissibility conditions. When $\B$ is Hermitian,
these admissibility conditions hold and the classical results are
recovered. The admissibility condition take slightly different forms,
depending on which decomposition is used in place of the spectral
decomposition into an orthonormal eigenbasis.

In Theorems \ref{thm:Complex_Rayleigh_quotient}, \ref{thm:Delsarte-HoffmanDirected},
$\B$ is assumed diagonalizable as $\B=\U\diag(\boldlambda(\B))\V^{*}$
where $\boldlambda(\B)$ may be complex and $\U,\V$ need not be unitary,
merely inverses: $\U\V^{*}=\I=\V^{*}\U$. The analysis of $\f^{*}\B\g$
proceeds by assuming $\f$ admits an expansion in terms of the rows
of $\V$ and $\g$ an expansion in terms of the rows of $\U$. Of
course, when $\U=\V$ these conditions are the same, and when $\f=\g$,
this condition always holds. On the other hand, Theorem \ref{thm:Delsarte-HoffmanSVD}
takes advantage of the singular value decomposition $\B=\U\diag(\boldsigma(\B))\V^{*}$
where $\U$ and $\mathbf{V}$ are unitary but $\U\ne\V$. The analysis
of $\B$ in this situation requires a different condition on $\f,\g$,
namely that $\f$ has an admissible decomposition with respect to
the rows of $\U$, and $\g$ with respect to the rows of $\V$. We
remark that in all of these cases, the crucial property is that for
$d$-regular unweighted graphs (or $\delta$-regular weighted graphs),
the first eigenvector or singular vector (both left and right) is
the vector $\frac{1}{\sqrt{n}}(1,1,\dots,1)^{\top}\in\mathbb{R}^{n\times1}$
with corresponding eigenvalue or singular value $d$. All subsequent
analysis is downstream from this observation.

Intuitively, as $\B$ deviates from being Hermitian, the admissibility
conditions will still hold for a large class of vectors $\f,\g$.
A topic of future research is to develop a rigorous perturbation theory
of Hermitian matrices that quantifies how likely the admissibility
conditions are to hold in a probabilistic sense. That is, if $\B$
is Hermitian, then the admissibility condition holds automatically
for all $\f=\g$. As $\f$ deviates from $\g$ and $\B$ deviates
from Hermiticity, it is of interest to determine which vectors (or,
what proportion of them in a probabilistic sense) satisfy the admissibility
condition. 

\section{Acknowledgements}

We are grateful to Jim Fill (Johns Hopkins University), Yuval Filmus
(Technion), and Xiaoqin Guo (University of Wisconsin, Madison) for
insightful comments regarding the results and presentation of this
manuscript.

 \bibliographystyle{amsalpha}
\bibliography{LAA.bib}

\end{document}